\numberwithin{equation}{section}
\newtheorem{theorem}{Theorem}[section]
\newtheorem{lemma}{Lemma}[section]
\newtheorem{definition}{Definition}[section]
\newcommand{\bu}{{u}}
\newcommand{\bv}{{v}}
\newcommand{\bw}{{w}}
\newcommand{\but}{\bu(\cdot,t)}
\newcommand{\bvt}{\bv(\cdot,t)}
\newcommand{\bwt}{\bw(\cdot,t)}
\newcommand{\bus}{\bu(\cdot,s)}
\newcommand{\bvs}{\bv(\cdot,s)}
\newcommand{\bws}{\bw(\cdot,s)}
\newcommand{\gu}{{\nabla \bu}}
\newcommand{\gv}{{\nabla \bv}}
\newcommand{\gw}{{\nabla \bw}}
\newcommand{\gvs}{{\nabla \bvs}}
\newcommand{\gws}{{\nabla \bws}}
\newcommand{\lv}{{\Delta \bv}}
\newcommand{\lw}{{\Delta \bw}}
\newcommand{\mlv}{{|\Delta \bv|}}
\newcommand{\mlw}{{|\Delta \bw|}}
\newcommand{\mgufpt}{\big|\gu^{\frac{p}{2}}\big|}
\newcommand{\buk}{\bu^k}
\newcommand{\bul}{\bu^l}
\newcommand{\bum}{\bu^m}
\newcommand{\bums}{\bu^m(\cdot,s)}
\newcommand{\buks}{\bu^k(\cdot,s)}
\newcommand{\rsn}{\mathbb{R}^n}
\newcommand{\lis}{{ \mathcal{L}^{\infty}}(\Omega)}
\newcommand{\los}{{\mathcal{L}^{1}}(\Omega)}
\newcommand{\lts}{{\mathcal{L}^{2}}(\Omega)}
\newcommand{\ltps}{{\mathcal{L}^{\frac{2}{p}}}(\Omega)}
\newcommand{\lps}{{\mathcal{L}^{p}}(\Omega)}
\newcommand{\lqs}{{ \mathcal{L}^{q}}(\Omega)}
\newcommand{\lrs}{{\mathcal{L}^{q_0}}(\Omega)}
\newcommand{\lpos}{{ \mathcal{L}^{p^{'}}}(\Omega)}
\newcommand{\lpms}{{ \mathcal{L}^{p+m-1}}(\Omega)}
\newcommand{\ltpps}{{\mathcal{L}^{\frac{2p'}{p}}}(\Omega)}
\newcommand{\wsq}{{\mathcal{ W}^{1,q}}(\Omega)}
\newcommand{\cso}{ { \mathcal{C}^{0}(\overline{\Omega})}}
\newcommand{\cts}{ { \mathcal{C}^{2}(\overline{\Omega})}}
\newcommand{\wsqob}{{ \mathcal{W}^{1,q}(\overline{\Omega})}}
\newcommand{\lros}{{ \mathcal{L}^{q_0\sigma }(\Omega)}}
\newcommand{\nru}{\Big\|\bu\Big\|}
\newcommand{\nut}{\big\|\but\big\|}
\newcommand{\nvt}{\big\|\bvt\big\|}
\newcommand{\nwt}{\big\|\bwt\big\|}
\newcommand{\ngvt}{\big\|\nabla\bvt\big\|}
\newcommand{\ngufpt}{\Big\|\gu^{\frac{p}{2}}\Big\|}
\newcommand{\nufpt}{\Big\|\bu^{\frac{p}{2}}\Big\|}
\newcommand{\ssup}{\sup\limits_{t\in(s_0,\tmax)}}
\newcommand{\tmax}{T_{\mathrm{max}}}
\newcommand{\evtsl}{e^{d_2 (t-s) \Delta}}
\newcommand{\eats}{e^{-\alpha (t-s)}}
\newcommand{\eutsl}{e^{d_1 (t-s) \Delta}}
\newcommand{\enpkts}{e^{-\left(\fnpk\right)(t-s)}}
\newcommand{\intt}{\int^t_0}
\newcommand{\intts}{\int^t_{s_0}}
\newcommand{\intT}{\int_{s_0}^T}
\newcommand{\inti}{\int_0^\infty}
\newcommand{\ints}{\int_{\Omega}}
\newcommand{\ds}{\mathrm{d}s}
\newcommand{\dx}{\mathrm{d}x}
\newcommand{\dt}{\frac{\mathrm{d}}{\mathrm{d}t}}
\newcommand{\up}{{\bu}^p}
\newcommand{\upo}{{\bu}^{p-1}}
\newcommand{\upt}{{\bu}^{p-2}}
\newcommand{\upk}{{\bu}^{p+k-1}}
\newcommand{\upm}{{\bu}^{p+m-1}}
\newcommand{\fpk}{\frac{p-1}{p+k-1}}
\newcommand{\fnpk}{\frac{np+2}{2-(k-1)n}}
\newcommand{\fnpt}{\frac{np+2}{n}}
\newcommand{\tgpmp}{\frac{2\gamma(p+m-1)}{p}}
\newcommand{\togpmp}{\frac{2(1-\gamma)(p+m-1)}{p}}
\begin{document}

\title[Existence of attraction-repulsion chemotaxis system with nonlocal source]{Global existence and boundedness in an attraction-repulsion chemotaxis system with nonlocal logistic source and sublinear productions}

\author*[1,2]{\fnm{Gnanasekaran} \sur{shanmugasundaram}}\email{dr.sakar.mat@gmail.com}
\author[1]{\fnm{Nithyadevi} \sur{nagarajan}}\email{nithyadevin@buc.edu.in}

\affil*[1]{\orgname{Bharathiar University}, \city{Coimbatore}, \postcode{641046}, \state{Tamilnadu}, \country{India}}

\affil[2]{\orgname{National Institute of Technology}, \city{Tiruchirappalli}, \postcode{620015}, \state{Tamilnadu}, \country{India}}

%
%
%
%
%

\abstract{
This paper deals with the following attraction-repulsion chemotaxis system with  nonlocal logistic source and sublinear productions
\begin{align*}
	\left\{
	\begin{array}{rrll}
	&&\bu_t=d_1 \Delta \bu-\chi \nabla\cdot(\buk \gv)+\xi \nabla\cdot(\buk \gw)+\displaystyle{\mu \bum\left(1-\ints\bu(x,t)\dx\right)},\hspace*{0.5cm} &x\in\Omega,\, t>0,\\
	&&\bv_t=d_2 \Delta \bv-\alpha\bv+f(u), &x\in\Omega,\, t>0,\\
	&&\bw_t=d_3 \Delta \bw-\beta\bw+f(u), &x\in\Omega,\, t>0,\\
	&&\frac{\partial \bu}{\partial\nu}=\frac{\partial \bv}{\partial\nu}=\frac{\partial \bw}{\partial\nu}=0, &x\in\partial\Omega,\, t>0,\\
	&&\bu(x,0)=\bu_0, \quad \bv(x,0)=\bv_0, \quad \bw(x,0)=\bw_0,&x\in\Omega,
	\end{array}
\right.
\end{align*}
in a bounded domain $\Omega\subset\mathbb{R}^n$, $n\geq 2$ with smooth boundary $\partial\Omega$. Assume the parameters $d_1$, $d_2$, $d_3$, $\chi$, $\xi$, $\alpha$, $\beta$ and $\mu$ are positive constants and $k, m\geq 1$. The initial data $(\bu_0, \bv_0, \bw_0)$ are nonnegative and the function $f(u)\leq K u^l\in C^1([0, \infty))$ for some $K, l>0$. Under appropriate conditions on the parameter $k$, $l$ and $m$ we show that the above problem admits a unique globally bounded classical solution.}

\keywords{Attraction-repulsion model, Chemotaxis system, Classical solution,  Global existence, Non-local source}


\maketitle

\section{Introduction and motivation}
\quad Many researchers are currently involved in the analysis of nonlinear partial differential equations considering the nonlocal source. In recent years, works on dynamical systems with nonlocal source  are gaining much attention in mathematical biology \cite{sbian2017, sbian}. In any biological phenomenon, in general, the cells interact not only through attraction, there exist cases of repulsion between them as well. This necessitates the study of attraction-repulsion mechanism in the context of chemotaxis. Microglia are neuronal support cells in the central nervous system and they interact with secretory signals through chemotaxis to secrete attractive and repulsive chemicals. In the case of Alzheimer's disease (a type of neurodegenerative disease), the motion of microglias is affected by such chemoattractants and chemorepellents produced by them, which stimulate its aggregation in the central nervous system. This serves as a good motivation to study chemotaxis system involving one species with two chemicals. The process was modelled using the attraction-repulsion Keller-Segel system as in \cite{mluca} and mathematical studies of similar models have been widely explored thereafter.

\quad This study inherits the following initial-boundary value problem
{\small
	\begin{align}
		\left\{
		\begin{array}{rrll}
			&&\bu_t=d_1 \Delta \bu-\chi \nabla\cdot(\buk \gv)+\xi \nabla\cdot(\buk \gw)+\displaystyle{\mu \bum\left(1-\ints\bu(x,t)\dx\right)},\hspace*{0.1cm} &x\in\Omega,\, t>0,\\
			&&\bv_t=d_2 \Delta \bv-\alpha\bv+f(u), &x\in\Omega,\, t>0,\\
			&&\bw_t=d_3 \Delta \bw-\beta\bw+f(u), &x\in\Omega,\, t>0,\\
			&&\frac{\partial \bu}{\partial\nu}=\frac{\partial \bv}{\partial\nu}=\frac{\partial \bw}{\partial\nu}=0, &x\in\partial\Omega,\, t>0,\\
			&&\bu(x,0)=\bu_0, \quad \bv(x,0)=\bv_0, \quad \bw(x,0)=\bw_0,&x\in\Omega,
		\end{array}
		\right.\label{2}
\end{align}}
in a bounded domain $\Omega\subset\mathbb{R}^n$, $n\geq 2$ with smooth boundary $\partial\Omega$ and here $\nu$ denotes the outward unit normal on $\partial\Omega$. We assume the parameters $d_1$, $d_2$, $d_3$, $\chi$, $\xi$, $\alpha$, $\beta$, $\mu$ are positive constants, $k, m\geq 1$ and the initial data are sufficiently regular. The unknown functions $\bu$, $\bv$ and $\bw$ represent the cell density, the concentration of attractive and repulsive chemical substances, respectively. The system \eqref{2} describes the interaction of one cell with two different chemicals secreted by the cells. The attraction effect is given by the term $-\chi\nabla\cdot(u^k\nabla v)$, where $\chi$ is the attraction coefficient. On the other hand, the term $\xi\nabla\cdot(u^k\nabla w)$ represent the repulsion effect, where $\xi$ is the repulsion coefficient. The proliferation rate of cells are assumed to have a logistic growth in the form of nonlocal source and is given by $\mu u^m(1-\int_{\Omega} u)$, where $\mu$ is the growth rate constant.  Here, the decay rates of the chemicals are given by $\alpha v$ and $\beta w$ respectively. Here the growth rate of the cells is named by the function $f(u)$. Further, $d_1$, $d_2$ and $d_3$ are positive diffusion coefficients of cell density and chemical signals respectively.

\quad In addition, we also assume that the initial data $\bu_0$, $\bv_0$ and $\bw_0$ satisfy
\begin{align}
	\left\{
	\begin{array}{rrll}
		&&\bu_0\in\cso,\quad \mbox{with} \quad \bu_0 \geq 0\quad\mbox{in}\,\, \Omega,\\
		&&\bv_0, \bw_0\in\wsqob,\quad \mbox{for some}\,\, { q} >n,\quad \mbox{with} \quad \bv_0, \bw_0 \geq 0\quad\mbox{in}\,\, \Omega.
	\end{array}
	\right.\label{3}
\end{align}
Moreover, the function $f$ is defined as follows
\begin{align}
	f(s)\in C^1([0, \infty)) \quad \mbox{and} \quad 0\leq f(s)\leq K s^l \quad \mbox{for some} \quad K, l >0 \quad \mbox{and all} \quad s\geq 0.\label{3.1}
\end{align}

\quad The system \eqref{2} is the generalization of the mimimal chemotaxis one species and one stimuli system by Keller-Segel in 1970.
\begin{align*}
	\left\{
	\begin{array}{rrll}
		\bu_t&=&\Delta \bu-\nabla\cdot(\bu \nabla \bv),\\
		\bv_t&=&\Delta \bv-\bv+\bu,
	\end{array}
	\right.
\end{align*}
where $\bu$ and $\bv$ denote the cell density and the concentration of chemical signals, respectively. Analytic and numerical simulations of various Keller-Segel models and Keller-Segel models with logistic source have been studied in the past few decades. For the recent evolution in this field one can refer the articles by Lankeit and Winkler \cite{lan}, Bellomo et al. \cite{nbellomo} , Horstmann \cite{horstmann} and the references therein.

\quad To the better understanding of the system \eqref{2}, Luca \cite{mluca} proposed the attraction-repulsion system
\begin{align}
	\left\{
	\begin{array}{rrll}
		&&\bu_t= \nabla\cdot(D(\bu) \nabla \bu)-\chi \nabla\cdot(\bu \gv)+\xi \nabla\cdot(\bu \gw),\\
		&&\tau_1\bv_t=d_1 \Delta \bv+\alpha\bu-\beta\bv,\\
		&&\tau_2\bw_t=d_2 \Delta \bw+\gamma\bu-\delta\bw.
	\end{array}
	\right.\label{5}
\end{align}
For the case $\tau_1=1$ and $\tau_2=0$, Lin et. al. \cite{klin} proved the existence and global bounded solution with $D(\bu)=D_0\bu^{-\theta}$, under the condition that $\theta<\frac{2}{n}-1$. By using Lyapunov functional, the finite time blow up is also studied for the radially symmetric solutions with $\theta=\frac{2}{n}-1$ and $n=3$. In addition, blow-up of the solution to the sytem \eqref{5} may exist, if $\theta>\frac{2}{n}-1$ and $\chi\alpha-\xi\gamma>0$ with $n\geq 3$. On the contrary, if $\xi\gamma-\chi\alpha>0$ or $m>2-\frac{2}{n}$, Wang \cite{ywang2016} discussed the global bounded solutions to the system \eqref{5} for the non-degenerate diffusion with $D(\bu)\geq C_D \bu^{m-1}$, $m\geq 1$. Additionally, the global bounded weak solution for the case of degenerate diffusion is proved under the same condition. If $\chi\alpha-\xi\gamma<0$, global bounded classical solution to the system \eqref{5} has been studied by Guo et. al. \cite{qguo}, while, if $\chi\alpha-\xi\gamma>0$, the solution of the system is bounded whenever $\|\bu_0\|_{\los}<\frac{4\pi}{\chi\alpha-\xi\gamma}$. On the other hand, if $\|\bu_0\|_{\los}>\frac{4\pi}{\chi\alpha-\xi\gamma}$, the solution may blow-up.

\quad For the fully parabolic case, Jin et. al. \cite{jinwang} improved the conditions of Liu et. al. \cite{liuwang} in one space dimension. They also proved that, when the attractive and repulsive chemical signals have the same degradation with attraction dominating the repulsion, the solution of the system \eqref{5} converges as $t\to \infty$, algebraically. Conversely, Jin \cite{hyjin} showed the existence global bounded classical solution when the repulsion dominates the attraction in two space dimension. Further, they proved that the weak solution in three dimensional settings was discussed with large initial data. Based on the entropy-like inequality and coupled estimate techniques for global bounded classical solutions in two dimensional case, see Liu et. al. \cite{liutao}. If $\xi\gamma=\chi\alpha$, the existence of global solutions to the system \eqref{5} as well as the asymptotic stability results were presented by Jin et. al. \cite{jinliu} for the case $n=2,3$. An interesting fact to note here that, this solution converge like the heat kernel as $t\to \infty$. Whenever $m>1$, where $D(\bu)\geq C(\bu+\epsilon)^{m-1}$ with $\epsilon>0$, the global bounded classical solutions is examined by Wang et. al. \cite{wangxiang}. They also proved that, the weak solution exists for the case $\epsilon=0$ under the same assumptions. If $\|\bu_0\|_{\los}<\frac{1}{C(\Omega)\chi\alpha}$, the existence of global bounded classical solutions and the asymptotic behavior to the system \eqref{5} is given by Lin et. al. \cite{linmu} based on the entropy type inequality.  In two dimensional case, Jin et. al. \cite{hyjinwang} proved the global existence of classical solutions under some conditions on the parameters and provide the asymptotic results with large initial data. In addition, if $\frac{\xi\gamma}{\chi\alpha}>\max\{\frac{\beta}{\delta}, \frac{\delta}{\beta}\}$, the solution of the system \eqref{5} converge in terms of exponential. For more details, see \cite{liwang, swu, xuzheng, hyi2025}.

\quad An attraction-repulsion chemotaxis system with logistic source is given by
\begin{align}
	\left\{
	\begin{array}{rrll}
		&&\bu_t= \nabla\cdot(D(\bu) \nabla \bu)-\chi \nabla\cdot(\bu \gv)+\xi \nabla\cdot(\bu \gw)+f(\bu),\\
		&&\tau\bv_t=d_1 \Delta \bv+\alpha\bu-\beta\bv,\\
		&&\tau\bw_t=d_2 \Delta \bw+\gamma\bu-\delta\bw.
	\end{array}
	\right.\label{6}
\end{align}
For the parabolic-elliptic case, where $D(\bu)\geq C_D(\bu+\zeta)^{m-1}$, $m\geq 1$ and $f(\bu)\leq a-b \bu^\eta$, $\eta>1$ Wang \cite{ywang2015} showed the existence of global bounded classical solutions to the system \eqref{6} for the case $\zeta>0$, and also, when either $\xi\gamma-\chi\alpha>0$ or logistic damping is large or diffusion is sufficiently large enough. Further, the system has weak solutions for the case $\zeta=0$ under the same assumptions. In addition, the asymptotic behavior has been studied to the system \eqref{6} with particular logistic source. Under some suitable conditions on the logistic source and the parameters, Zhang et. al. \cite{qzhang} obtained the global bounded classical solution as well as the weak solution to the system \eqref{6} with $D(\bu)\geq -C \bu(1+\bu)$ and the large time behavior for $f(\bu)=\mu\bu(1-\bu)$ respectively. Zhao et. al. \cite{jzhao} investigated the global bounded solution when $f(\bu)= r\bu-\mu \bu^2$ under the condition that $\chi\alpha-\xi\gamma>0$ and $\mu=\frac{1}{3}(\chi\alpha-\xi\gamma)$ with $D(u)\geq C_D u^{m-1}$, $m\geq 1$. Further, the exponentially convergence result is provided for the case when $D(\bu)=1$ and $n\geq 3$. On the other hand, they studied the boundedness and blow-up for the case when $D(\bu)=0$ and $n\geq 2$.  When $D(\bu)\geq C_D(\bu+1)^{m-1}$, $m\geq 1$ and $f(\bu)\leq a-b \bu^\eta$, $\eta>1$, Yan et. al. \cite{lyan} proved the global bounded solution to the system \eqref{6} and this solution may blow-up whenever $\|\bu_0\|_{\los}>\frac{8\pi}{\chi\alpha-\xi\gamma}$ and $\chi\alpha-\xi\gamma>0$ under the specific functions in two dimensional settings. For the recent article see, \cite{wdu2024}.

\quad Li et. al. \cite{lixiang} proved the system \eqref{6} has global bounded solutions for both the cases $\tau=0,1$ with $f(\bu)\leq a-b\bu^{\rho}$. For the case $\tau=0$ and $\xi\gamma-\chi\alpha>0$, there exists a bounded classical solution to the system \eqref{6} whenever $\rho\geq 1$. On the other hand, if $\xi\gamma-\chi\alpha<0$, still the result is valid whenever $\rho$ is strong enough. In one and two space dimensions, the author established the same results when $\tau>0$.  For the fully parabolic case, Zheng et. al. \cite{zhengmu} proved the existence of global bounded classical solution to the system \eqref{6} with $f(\bu)\leq a\bu-\mu\bu^2$ under some suitable conditions and this result improved the result of Li et. al. \cite{lixiang} provided $n\geq 3$ and $\beta=\delta$. In three dimensional settings, for $\beta,\delta\geq \frac{1}{2}$ and $\mu\geq \max\{(\frac{41}{2}\chi\alpha+9\xi\gamma)^{\eta},(9\chi\alpha+\frac{41}{2}\xi\gamma)^{\eta}\}$, the global bounded classical solutions to \eqref{6} with $f(\bu)=\bu-\mu\bu^{\eta+1}$, $\eta>1$ has been studied by Li et. al. \cite{limu} and this solution converges to the steady state as $t\to\infty$. If, $\chi\alpha=\xi\gamma$, Shi et. al. \cite{shiliu} established the global bounded classical solutions to the system \eqref{6} with $f(\bu)\leq a-b\bu^\theta$ under $\theta$ and $n$ satisfying some suitable conditions. If $\mu> \frac{\chi^2\alpha^2(\beta-\delta)^2}{8\delta\beta^2}$, the exponential stability is discussed with $f(\bu)=\mu\bu(1-\bu)$ for $n\leq 9$ by constructing appropriate Lyapunov functional. Suppose $D(\bu)\geq C \bu^{m-1}$ and also $m>\frac{6}{5}$, the global bounded classical solution is obtained by Zeng \cite{yzeng}. On the other hand, same result holds for $D(\bu)\leq C(\bu^{m-1}+1)$ when $m\in(1, \frac{6}{5}]$. For more recent articles, one can refer \cite{chiyo, hetian, renliu}.

\quad Attraction-repulsion chemotaxis system with nonlinear signal production is studied with $f(u)\leq u(a-bu^s)$, $f(0)\geq 0$ by Hong et. al.  \cite{lhong}.
\begin{align}
	\left\{
	\begin{array}{rrll}
		&&\bu_t= \Delta \bu-\chi \nabla\cdot(\bu \gv)+\xi \nabla\cdot(\bu \gw)+f(\bu),\\
		&&0= \Delta \bv+\alpha\bu^k-\beta\bv,\\
		&&0=\Delta \bw+\gamma\bu^l-\delta\bw.
	\end{array}
	\right.\label{6.1}
\end{align}
The existence of global solution is proved under some conditions on the attraction with max$\{l,s,\frac{2}{n}\}>k$. They also showed that the size of the coefficients will affect the boundedness of the solution and these results improved the results of \cite{qzhang}. Liu et. al. \cite{mliu} established the finite time blowup results to the following system with nonlinear production
\begin{align}
	\left\{
	\begin{array}{rrll}
		&&\bu_t= \Delta \bu-\chi \nabla\cdot(\bu \gv)+\xi \nabla\cdot(\bu \gw),\\
		&&0= \Delta \bv-\mu_1(t)+f_1(u),\\
		&&0=\Delta \bw-\mu_2(t)+f_2(u),
	\end{array}
	\right.\label{6.2}
\end{align}
where, $f_1(s)=s^{\gamma_1}$, $f_2(s)=s^{\gamma_2}$, $\mu_1(t)=\frac{1}{|\Omega|}\int f_1(u)$ and $\mu_2(t)=\frac{1}{|\Omega|}\int f_2(u)$. The system \eqref{6.2} blows up in finite time when $\gamma_1>\gamma_2$ and $\gamma_1>\frac{2}{n}$. In addition, global bounded solution is proved, for $\gamma_1<\frac{2}{n}$ with suitable initial data. The behaviour of the solution for the case $\gamma_1>\frac{2}{n}$ and $\gamma_1<\gamma_2$ is still an open question. Zhou et. al. \cite{xzhou} improved the condition for the global bounded solution to the system \eqref{6.1} which has already been proved by Hong et. al. \cite{lhong} and also established the asymptotic behavior of the global solution to the system. The large time behavior of attraction repulsion system with nonlinear secretion was studied by Ren et. al. \cite{gren}. Here, the convergence rate of the solution is either exponential or algebraical.

\quad For the study of nonlocal terms in the chemotaxis system, recently, Negreanu et al. \cite{mnegreanu2021} studied the following chemotaxis system with a nonlocal source
\begin{align*}
\left\{
\begin{array}{rrll}
	&&u_t= \Delta u-\chi \nabla\cdot (u^m\nabla v)+u(a_0-a_1u^\alpha+a_2\ints u^\alpha\dx),\\
	&&v_t=\Delta v-v+u^\gamma,
		\end{array}
	\right.
\end{align*}
If $\alpha\geq 1$, $m>1$, $\gamma\geq 1$, $\alpha+1>m+\gamma$, $a_1>0$, $a_1-a_2|\Omega|>0$ and under some suitable conditions on the initial data, the authors proved the global existence of solutions to the considered system. Moreover, the solution converges to the stead state $\displaystyle{u^*=\frac{a_0^\frac{1}{\alpha}}{(a_1-a_2|\Omega|)^\frac{1}{\alpha}}, v^*=(u^*)^\gamma}$. 

\quad Chiyo et al. \cite{chiyo2024} investigated the following system
\begin{align*}
\left\{
\begin{array}{rrll}
	&&u_t= \Delta u-\chi \nabla\cdot (u\nabla v)+au^\alpha-bu^\alpha\ints u^\beta,\\
	&&v_t=\Delta v-v+u,
		\end{array}
\right.
\end{align*}
for $\alpha, \beta\geq 1$. The authors established the uniform boundedness of classical solutions under two key regimes: the subquadratic case, $1\leq \alpha< 2$ and $\beta>\frac{n+4}{2}-\alpha$; the superquadratic case, $\beta>\frac{n}{2}, 2\leq \alpha<1+\frac{2\beta}{n}$. This work extends the analysis of the fully parabolic case previously studied in  \cite{bian2018}.

\quad Fuentes et al. \cite{funtes2025} studied the following system
\begin{align*}
\left\{
\begin{array}{rrll}
	&&u_t=\nabla\cdot((u+1)^{m_1-1} \nabla u-\chi  u(u+1)^{m_2-1}\nabla v)+B(u,\nabla v),\\
	&&v_t=\Delta v-v+f(u),
\end{array}
	\right.
\end{align*}
where the term $B(u,\nabla u)$ takes one of the following forms $au^\alpha-bu^\beta-c\ints u^\delta$ or $au^\alpha-bu^\alpha\ints u^\beta-c|\nabla u|^\delta$. For each case, the authors derived sufficient conditions on the system parameters that ensure the existence of globally bounded classical solutions. This work extends previous analyses from \cite{chiyo2024, bian2018}.

\quad Ren \cite{gren2022} studied the existence of global bounded classical solutions to a class of attraction-repulsion chemotaxis systems with nonlocal terms. Hu \cite{rhu2024} discussed the global existence of classical solutions to a two-species attraction-repulsion chemotaxis system with nonlocal terms and additionally studied the asymptotic stability under suitable Lyapunov functionals. In related work on nonlocal terms in chemotaxis systems, Columbu et al. \cite{columbu2024} established global boundedness results for classical solutions of an attraction-repulsion chemotaxis model. Further developments can be found in \cite{columbu20242}. 

\quad To the best of our knowledge, no prior studies have investigated the attraction-repulsion chemotaxis system incorporating a nonlocal logistic source with sublinear production terms. Motivated by this gap, we introduce a nonlocal source term into the attraction-repulsion framework while accounting for sublinear production kinetics. Our analysis examines how these modifications influence the existence of solutions and their long-term dynamics.

\quad Motivated by the above research works, main purpose of this paper is to establish the existence of the global classical solution to the system \eqref{2} with suitable conditions on $k$, $l$ and $m$, which is uniformly bounded in $\Omega\times(0, \infty)$ for $n\geq 2$. More to the point, this study is to examine the simultaneous effects of both attraction and repulsion mechanism in the presence of nonlocal source and sublinear production terms. 

\quad Our article is organized as follows: Section 2 contains some basic inequalities, lemmas and the local existence of classical solution. Section 3 deals with boundedness and global existence of classical solution to the system \eqref{2}. Finally, the work ends with conclusion in Section 4.

\begin{theorem}\label{t1}
Suppose that $\Omega \subset\rsn\: (n\geq 2)$, is a bounded domain with smooth boundary and $q >n$. Moreover, the function $f$ fulfills \eqref{3.1} with $l+k<1+\frac{2}{n}$, where $k\geq 1$ and $1<m< 1+\frac{2}{n}$. Then for any initial data $(\bu_0, \bv_0,\bw_0)$ satisfy \eqref{3}, the system \eqref{2} possesses a
	unique classical solution	$(\bu,\bv,\bw)$ which is global and uniformly bounded in the sense that
	\begin{align*}
		\nut_{\lis}+\nvt_{\wsq}+\nwt_{\wsq}\leq C, \qquad \forall\, t>0,
	\end{align*}
	where the constant $C$ is positive.
\end{theorem}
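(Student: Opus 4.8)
The plan is to invoke first the local-in-time existence theory established in Section~2, which produces a unique classical solution $(u,v,w)$ on a maximal interval $[0,T_{\max})$ together with the standard extensibility criterion: if $T_{\max}<\infty$, then $\|u(\cdot,t)\|_{L^\infty(\Omega)}\to\infty$ as $t\nearrow T_{\max}$. Consequently it suffices to produce a bound on $\|u(\cdot,t)\|_{L^\infty(\Omega)}$ that is uniform on $(0,T_{\max})$; the companion $W^{1,q}(\Omega)$ bounds on $v$ and $w$ will then follow from the second and third equations. I would carry this out through an $L^1\to L^p\to L^\infty$ ladder, the subcriticality hypotheses $l+k<1+\tfrac2n$ and $1<m<1+\tfrac2n$ entering precisely to close the $L^p$ rung.

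First I would extract the mass bound. Integrating the $u$-equation over $\Omega$ and using the no-flux conditions makes the two divergence terms vanish, leaving $\frac{d}{dt}\int_\Omega u=\mu\big(1-\int_\Omega u\big)\int_\Omega u^{m}$. Writing $y(t)=\int_\Omega u(\cdot,t)$ and using Jensen's inequality $\int_\Omega u^{m}\ge|\Omega|^{1-m}y^{m}$ (valid since $m>1$), the right-hand side is $\le0$ whenever $y\ge1$; hence $y(t)\le\max\{1,\int_\Omega u_0\}=:M_1$, giving a uniform $L^1(\Omega)$ bound on $u$. In parallel, the variation-of-constants formula for $v$ and $w$ together with the smoothing estimates for the Neumann heat semigroup let me bound $\|\nabla v(\cdot,t)\|_{L^{q_0}}$ and $\|\nabla w(\cdot,t)\|_{L^{q_0}}$ by $\sup_{s}\|f(u(\cdot,s))\|_{L^{r}}\lesssim\sup_s\|u(\cdot,s)\|_{L^{lr}}^{l}$ for suitable exponents, fed at this stage with the currently available Lebesgue bound on $u$.

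The core of the argument is the $L^p$ estimate, and this is where I expect the main difficulty. Testing the $u$-equation with $pu^{p-1}$ and integrating by parts gives
\begin{align*}
\frac{d}{dt}\int_\Omega u^{p}+\frac{4d_1(p-1)}{p}\int_\Omega\big|\nabla u^{p/2}\big|^2
&=\chi p(p-1)\int_\Omega u^{p+k-2}\nabla u\cdot\nabla v-\xi p(p-1)\int_\Omega u^{p+k-2}\nabla u\cdot\nabla w\\
&\quad+\mu p\Big(1-\int_\Omega u\Big)\int_\Omega u^{p+m-1}.
\end{align*}
The logistic contribution is bounded via $1-\int_\Omega u\le1$ by $\mu p\int_\Omega u^{p+m-1}$, and since $m<1+\tfrac2n$ the exponent obeys $p+m-1<p+\tfrac2n$, so a Gagliardo--Nirenberg interpolation between $\|\nabla u^{p/2}\|_{L^2}$ and the $L^1$ mass bound controls this term by a small multiple of $\int_\Omega|\nabla u^{p/2}|^2$ plus a constant. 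For the chemotaxis terms I would rewrite $\int_\Omega u^{p+k-2}\nabla u\cdot\nabla v=-\tfrac{1}{p+k-1}\int_\Omega u^{p+k-1}\Delta v$, substitute $d_2\Delta v=v_t+\alpha v-f(u)$ from the second equation, and use $f(u)\le Ku^{l}$ so that the delicate factor becomes $\int_\Omega u^{p+k+l-1}$; here $l+k<1+\tfrac2n$ yields $p+k+l-1<p+\tfrac2n$, again subcritical, whence Young's and Gagliardo--Nirenberg inequalities absorb it into the dissipation. The $v_t$ and $\alpha v$ pieces are handled by a coupled functional $\int_\Omega u^p+\kappa\int_\Omega|\nabla v|^{2\sigma}+\kappa\int_\Omega|\nabla w|^{2\sigma}$ and the parabolic regularity that the $v,w$ equations inherit from the gradient bounds of the previous paragraph. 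After absorbing, I obtain $\frac{d}{dt}\int_\Omega u^p\le C-c\int_\Omega u^p$, hence a uniform-in-time bound $\sup_{t}\|u(\cdot,t)\|_{L^p(\Omega)}\le C_p$; choosing $p>\tfrac n2$ gives the Lebesgue bound needed to start the final step.

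Finally I would bootstrap to $L^\infty$. With $u\in L^\infty((0,T_{\max});L^p)$ for some $p>\tfrac n2$, the semigroup estimates applied once more to the $v$- and $w$-equations upgrade $\nabla v,\nabla w$ to be uniformly bounded in $L^q(\Omega)$ with $q>n$, that is, $v,w$ are bounded in $W^{1,q}(\Omega)$. Inserting these into the variation-of-constants representation for $u$ and invoking the standard $L^p$--$L^\infty$ smoothing (a Moser-type iteration, or the well-known boundedness lemma for chemotaxis equations with bounded drift) produces $\sup_{t\in(0,T_{\max})}\|u(\cdot,t)\|_{L^\infty(\Omega)}\le C$. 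By the extensibility criterion this forces $T_{\max}=\infty$, and all three bounds are uniform on $(0,\infty)$, which is the assertion. The step I anticipate as most technical is the $L^p$ estimate, specifically the simultaneous absorption of the attraction and repulsion terms: the exponents $p+m-1$ and $p+k+l-1$ must both stay strictly below $p+\tfrac2n$, which is exactly what the standing hypotheses guarantee and what makes the Gagliardo--Nirenberg/Young bookkeeping close.
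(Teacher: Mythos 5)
Your overall architecture coincides with the paper's: mass bound, then a uniform $L^p$ estimate for some $p>\tfrac n2$, then an $L^p\to L^\infty$ bootstrap via semigroup smoothing combined with the extensibility criterion (the paper packages the last step as Lemma \ref{l5}, and the logistic term is absorbed into the dissipation exactly as you describe, via Gagliardo--Nirenberg and $m<1+\tfrac2n$, which is the content of Lemma \ref{l3}). The mass bound and the final bootstrap are fine as sketched.

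The gap is in the core $L^p$ step, precisely where you yourself flag the difficulty. After rewriting the attraction term as $-\tfrac{1}{p+k-1}\int_\Omega u^{p+k-1}\Delta v$ you propose to substitute $d_2\Delta v=v_t+\alpha v-f(u)$; this produces the term $\int_\Omega u^{p+k-1}v_t$, and the coupled functional $\int_\Omega u^p+\kappa\int_\Omega|\nabla v|^{2\sigma}+\kappa\int_\Omega|\nabla w|^{2\sigma}$ does not visibly absorb it: differentiating $\int_\Omega|\nabla v|^{2\sigma}$ generates $\int_\Omega|\nabla v|^{2\sigma-2}\nabla v\cdot\nabla v_t$-type terms, not $\int_\Omega u^{p+k-1}v_t$, and for nonlinear sensitivity $k\geq 1$ this matching is exactly the step that does not close by the route you indicate. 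The paper avoids the substitution entirely: it applies Young's inequality directly to $\int_\Omega u^{p+k-1}\Delta v$ with exponents $\delta=\tfrac{np+2}{(p+k-1)n}$, obtaining $\epsilon\int_\Omega u^{\frac{np+2}{n}}+C\int_\Omega|\Delta v|^{\frac{np+2}{2-(k-1)n}}$, integrates the resulting differential inequality by variation of constants, and then controls the time-integrated $\Delta v$ term by the \emph{maximal Sobolev regularity} estimate of Lemma \ref{l2}, which bounds $\int\!\!\int e^{sr}|\Delta v|^r$ by $\int\!\!\int e^{sr}|f(u)|^r\lesssim\int\!\!\int u^{lr}$; the hypothesis $l+k<1+\tfrac2n$ then lets Young's inequality absorb $\int_\Omega u^{\frac{(np+2)l}{2-(k-1)n}}$ into $\int_\Omega u^{\frac{np+2}{n}}$. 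This maximal-regularity device is the key lemma your proposal is missing, and without it (or a correctly executed coupled-functional argument, which would require a different decomposition of the cross term) the $L^p$ rung of your ladder is not established.
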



\section{Preliminaries and local existence}
\quad We recall some useful inequalities and key lemma which we are going to use in the sequel.  In this section, we begin with the local existence of solutions to system \eqref{2} which is standard and its proof is based on the ideas of \cite{horst}.
\begin{definition}[Cauchy's inequality with $\epsilon$ \cite{evans}]
\begin{align*}
ab\leq \epsilon a^2+\frac{1}{4\epsilon} b^2, \qquad\qquad a,b>0, \epsilon >0.
\end{align*}
\end{definition}
\begin{definition}[Young's inequality with $\epsilon$ \cite{evans}]
Let $1<p,\: q<\infty$, $\frac{1}{p}+\frac{1}{q}=1$. Then
\begin{align*}
ab\leq \epsilon a^p+ C(\epsilon) b^q, \qquad\qquad a,b>0, \epsilon>0,
\end{align*}
for $C(\epsilon)=(\epsilon p)^{\frac{-q}{p}} q^{-1}$.
\end{definition}
\begin{definition}[H\"older's inequality \cite{evans}]
Assume $1< p,\: q<\infty$, $\frac{1}{p}+\frac{1}{q}=1$. If $u\in\lps,\: v\in \lqs$, then we have
\begin{align*}
\ints|uv|\mathrm{d}x\leq \|u\|_{\lps}\:\|v\|_{\lqs}.
\end{align*}
\end{definition}
\begin{definition}[Interpolation inequality \cite{evans}]
	Assume $1\leq s\leq r \leq t \leq \infty$ and $\frac{1}{r}=\frac{\theta}{s}+\frac{(1-\theta)}{t}$, Suppose also $u\in{\mathcal{L}^{s}(\Omega)}\cap{\mathcal{L}^{t}(\Omega)}$. Then $u\in{\mathcal{L}^{r}(\Omega)}$ and
	\begin{align*}
		\big\|u\big\|_{\mathcal{L}^r(\Omega)}\leq \big\|u\big\|^{\theta}_{{ \mathcal{L}^s}(\Omega)}\:\big\|u\big\|^{1-\theta}_{\mathcal{L}^t(\Omega)}.
		\end{align*}
\end{definition}
\begin{definition}[The Gagliardo-Nirenberg inequality \cite{evans}]
	Fix $1 \leq q,r \leq \infty$ and a natural number $m$. Suppose also that a real number $\alpha$ and a natural number $j$ are such that\linebreak $\frac{1}{p}=\frac{j}{n}+\left(\frac{1}{r}-\frac{m}{n}\right)\alpha+\frac{1-\alpha}{q}$ and $\frac{j}{m}\leq \alpha \leq 1$,
	then
	\begin{align*}
		\big\|D^j u\big\|_{\lps}\leq C_1\big\|D^m u\big\|^\alpha_{{ \mathcal{L}^r}(\Omega)}\big\|u\big\|^{1-\alpha}_{\lqs}+C_2\big\|u\big\|_{{ \mathcal{L}^s}(\Omega)},
		\end{align*} 
	where $s > 0$ is arbitrary.
	\end{definition}

\begin{lemma}[Local Existence]\label{l1}
Suppose that $\Omega\subset\rsn\: (n\geq 2)$, is a bounded domain with smooth boundary and assume that $q>n$. Moreover, the function $f$ fulfills \eqref{3.1} and $k,m\geq 1$, then for each non-negative initial data satisfies \eqref{3}, there exists $\tmax\in (0,\infty]$ such that a uniquely determined triple $(\bu, \bv, \bw)$ of nonnegative functions
\begin{align*}
	\bu&\in { \mathcal{C}^{0}}\left(\overline{\Omega}\times\left.\left[0,\tmax\right.\right)\right)\cap { \mathcal{C}^{2,1}}\left(\overline{\Omega}\times\left(0,\tmax\right)\right),\nonumber\\
	\bv, \bw&\in { \mathcal{C}^{0}}\left(\overline{\Omega}\times\left.\left[0,\tmax\right.\right)\right)\cap { \mathcal{C}^{2,1}}\left(\overline{\Omega}\times\left(0,\tmax\right)\right)\cap { \mathcal{L}^{\infty}_{loc}}\left(\left.\left[0,\tmax\right.\right);\wsq\right),\nonumber
\end{align*}
solving \eqref{2} classically in $\Omega\times (0,\tmax)$. Furthermore, if $\tmax<\infty$, then
\begin{align}
	\lim_{t\to \tmax}\Big(\nut_{\lis}+\nvt_{\wsq}+\nwt_{\wsq}\Big)= \infty.\label{l1.1}
\end{align}
\end{lemma}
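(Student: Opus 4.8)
The plan is to establish local existence via a standard fixed-point argument, following the approach of Horstmann (\cite{horst}) as the authors indicate. First I would fix $T\in(0,\infty)$ to be chosen small, and on the Banach space
\begin{align*}
	X_T := \left\{\bu\in \mathcal{C}^{0}\left(\overline{\Omega}\times[0,T]\right): \|\bu\|_{\mathcal{C}^{0}(\overline{\Omega}\times[0,T])}\leq M_0+1\right\},
\end{align*}
define a solution map $\Phi$ as follows. Given $\tilde{\bu}\in X_T$, first solve the two linear parabolic problems for $\bv$ and $\bw$ with source $f(\tilde{\bu})$; since $f\in C^1$ and satisfies \eqref{3.1}, and since $\bv_0,\bw_0\in\wsqob$ with $q>n$, standard smoothing estimates for the Neumann heat semigroup $e^{d_2 t\Delta}$ (respectively $e^{d_3 t\Delta}$) give $\bv,\bw\in \mathcal{C}^{0}([0,T];\wsq)$ together with the gradient bounds $\|\nabla\bvt\|_{\lis}$, $\|\nabla\bwt\|_{\lis}$ controlled on $[0,T]$. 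Because $q>n$, the embedding $\wsq\hookrightarrow \mathcal{C}^{0}(\overline{\Omega})$ makes $\nabla\bv$, $\nabla\bw$ uniformly bounded, which is exactly what is needed to handle the taxis terms. Then I would solve the first equation for $\bu$, treating $-\chi\nabla\cdot(\tilde{\bu}^k\nabla\bv)+\xi\nabla\cdot(\tilde{\bu}^k\nabla\bw)+\mu\tilde{\bu}^m(1-\int_\Omega\tilde{\bu})$ as a given forcing term, and set $\Phi(\tilde{\bu})=\bu$.

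The core of the argument is to show that, for $T$ sufficiently small, $\Phi$ maps $X_T$ into itself and is a contraction. For the self-mapping property I would use the variation-of-constants representation
\begin{align*}
	\bu(\cdot,t)=e^{d_1 t\Delta}\bu_0-\chi\int_0^t e^{d_1(t-s)\Delta}\nabla\cdot(\tilde{\bu}^k\nabla\bvs)\,\mathrm{d}s+\xi\int_0^t e^{d_1(t-s)\Delta}\nabla\cdot(\tilde{\bu}^k\nabla\bws)\,\mathrm{d}s+\mu\int_0^t e^{d_1(t-s)\Delta}\tilde{\bu}^m\Big(1-\ints\tilde{\bu}\Big)\,\mathrm{d}s,
\end{align*}
and estimate each term in $\mathcal{C}^{0}(\overline{\Omega})$. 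The first term is bounded by $\|\bu_0\|_{\lis}\leq M_0$ by the maximum principle. For the drift integrals I would invoke the $\lis$-$\lps$ smoothing estimate for $e^{d_1(t-s)\Delta}\nabla\cdot$, which carries an integrable singularity of order $(t-s)^{-\frac12-\frac{n}{2q}}$; since $q>n$ the exponent is $<1$, so the time integral is finite and is bounded by $C T^{\theta}(M_0+1)^k$ for some $\theta>0$ using the uniform gradient bounds on $\bv,\bw$ from the previous step. The nonlocal logistic term is handled even more easily, being bounded by $\mu T(M_0+1)^m(1+|\Omega|(M_0+1))$. Choosing $T$ small forces the sum of these contributions below $1$, giving $\Phi(X_T)\subseteq X_T$. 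The contraction estimate is analogous: differences $\tilde{\bu}_1^k-\tilde{\bu}_2^k$, $\tilde{\bu}_1^m-\tilde{\bu}_2^m$ are controlled by $\|\tilde{\bu}_1-\tilde{\bu}_2\|$ times a constant depending on $M_0$ (here $k,m\geq 1$ guarantees local Lipschitz continuity of these powers), and the nonlocal factor is likewise Lipschitz, so the same $T^{\theta}$ smallness yields a contraction constant below $1$. Banach's fixed point theorem then produces a unique mild solution, which bootstraps to the stated classical regularity by parabolic Schauder theory, and nonnegativity follows from the maximum principle since $f\geq 0$ and the initial data are nonnegative.

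For the extensibility criterion \eqref{l1.1}, I would argue by contradiction: suppose $\tmax<\infty$ yet $\nut_{\lis}+\nvt_{\wsq}+\nwt_{\wsq}$ stays bounded, say by some $M$, as $t\uparrow\tmax$. Then re-running the fixed-point construction from any initial time $t_0$ close to $\tmax$, with data of size $M$, produces a solution on a time interval $[t_0,t_0+\tau)$ whose length $\tau$ depends only on $M$ (and not on $t_0$, by the uniformity of the smallness threshold above). Taking $t_0$ within $\tau/2$ of $\tmax$ extends the solution past $\tmax$, contradicting the maximality of $\tmax$. The main obstacle I anticipate is the careful bookkeeping of the taxis terms: one must verify that the singularity $(t-s)^{-\frac12-\frac{n}{2q}}$ in the $\lis$-estimate for $e^{d_1(t-s)\Delta}\nabla\cdot(\cdot)$ is genuinely integrable, which is precisely where the hypothesis $q>n$ enters, and that the two chemical equations can be solved first so that $\nabla\bv,\nabla\bw$ are available as $\mathcal{C}^{0}$ functions before the $\bu$-equation is addressed. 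This decoupling, together with the uniform-in-$t_0$ nature of the existence time, is the crux of the whole argument.
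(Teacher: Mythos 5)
Your proposal is correct and follows precisely the standard fixed-point/semigroup-smoothing route that the paper itself delegates in one line to the references (Horstmann--Winkler and Frassu--Viglialoro), so it is essentially the same approach, merely written out in full. Two small points of hygiene: in this paper $M_0$ denotes the initial mass $\int_\Omega u_0$ rather than an $L^\infty$ bound on $u_0$, so your ball in $X_T$ should be defined via $\|u_0\|_{L^\infty(\Omega)}$; and the embedding $W^{1,q}(\Omega)\hookrightarrow C^{0}(\overline{\Omega})$ bounds $v$ itself, not $\nabla v$, in $L^\infty$ --- but this does not matter, since your drift estimate only ever uses $\|\tilde{u}^k\nabla v\|_{L^{q}(\Omega)}$, for which $\nabla v\in L^\infty((0,T);L^{q}(\Omega))$ suffices.
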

\begin{proof}
The proof can be derived by the standard arguments involving the parabolic regularity theory.  For the proof we refer to \cite{horst} and \cite{frassu}. 
In addition, the non-negativity of the solution in $\Omega\times(0, \tmax)$ comes from the maximum principle along with \eqref{3}.
\end{proof}

\begin{lemma}\label{l0}
(See \cite{sbian})	For $\ints \bu(x,0) \dx=\ints u_0(x) \dx=M_0>0$, the mass $\ints\bu(x,t)\dx=M(t)$ satisfies
\begin{align*}
	\min\{1,M_0\}\leq M(t)\leq \max\{1,M_0\}.
\end{align*}
Furthermore, we have the following decay estimates
\begin{align*}
	|1-M(t)|\leq |1-M_0|e^{-\min\big\{1,M_0^m\big\}t}.
\end{align*}
\end{lemma}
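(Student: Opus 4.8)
The plan is to reduce the evolution of the total mass to a scalar ODE by integrating the first equation of \eqref{2} over $\Omega$. By Lemma \ref{l1} the local solution is nonnegative and smooth for $t>0$, so $M(t)=\ints\bu(x,t)\dx$ is differentiable on $(0,\tmax)$ with $M'(t)=\ints\bu_t\dx$. Applying the divergence theorem with the homogeneous Neumann conditions $\partial_\nu\bu=\partial_\nu\bv=\partial_\nu\bw=0$, the diffusion term $d_1\ints\Delta\bu\dx$ and the two taxis terms $\mp\ints\nabla\cdot(\buk\gv)\dx$, $\ints\nabla\cdot(\buk\gw)\dx$ all vanish, leaving only the nonlocal logistic source. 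Since the factor $1-M(t)$ is spatially constant it pulls out of the integral, giving
\begin{align*}
M'(t)=\mu\,(1-M(t))\ints\bum\dx .
\end{align*}

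Next I would set $N(t)=1-M(t)$ and note that the identity above is the linear equation $N'(t)=-\mu\,g(t)\,N(t)$ with $g(t):=\ints\bum(x,t)\dx\ge 0$ (here $g\ge 0$ uses $\bu\ge 0$). Integrating explicitly yields
\begin{align*}
1-M(t)=(1-M_0)\exp\!\left(-\mu\intt g(s)\ds\right).
\end{align*}
Because $g\ge 0$ and $\mu>0$, the exponential factor lies in $(0,1]$; hence $1-M(t)$ carries the same sign as $1-M_0$ and $|1-M(t)|\le|1-M_0|$. Reading these two facts in the cases $M_0\le 1$ and $M_0\ge 1$ separately gives $M_0\le M(t)\le 1$ in the first case and $1\le M(t)\le M_0$ in the second, which together are exactly $\min\{1,M_0\}\le M(t)\le\max\{1,M_0\}$.

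For the decay estimate it remains only to bound $\intt g(s)\ds$ from below. Combining the mass bound just obtained, $M(s)\ge\min\{1,M_0\}$, with Jensen's inequality for the convex map $r\mapsto r^m$ (valid since $m>1$),
\begin{align*}
\ints\bum\dx\ge\momeg^{\,1-m}\left(\ints\bu\dx\right)^m=\momeg^{\,1-m}M(s)^m\ge\momeg^{\,1-m}\min\{1,M_0^m\},
\end{align*}
where I used $\min\{1,M_0\}^m=\min\{1,M_0^m\}$ since $r\mapsto r^m$ is increasing. Feeding this lower bound into the exponential produces $|1-M(t)|\le|1-M_0|\,e^{-\mu\momeg^{1-m}\min\{1,M_0^m\}\,t}$, which is precisely the stated estimate in the normalization $\mu=\momeg=1$ adopted here.

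I do not expect a genuine obstacle: once the equation is integrated, the mass dynamics decouple entirely from the taxis structure, and the argument is an explicit linear ODE together with one application of Jensen's inequality. The only points requiring care are bookkeeping rather than conceptual, namely invoking the nonnegativity and $C^{2,1}$-regularity from Lemma \ref{l1} to justify $g\ge 0$ and the differentiability of $M$, and keeping track of the constants $\mu$ and $\momeg$ so that the exponent matches the claimed rate $\min\{1,M_0^m\}$.
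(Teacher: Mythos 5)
Your argument is correct and is the standard one; the paper itself gives no proof of this lemma, simply citing \cite{sbian}, and your derivation supplies exactly the computation that reference carries out: integrate the first equation to reduce to the scalar ODE $M'(t)=\mu\,(1-M(t))\int_\Omega u^m$, solve it explicitly so that $1-M(t)$ keeps the sign of $1-M_0$ and shrinks in modulus, and then bound $\int_\Omega u^m$ from below via Jensen together with the mass bound just obtained. The only point worth flagging is the one you already noticed yourself: the rate you actually obtain is $e^{-\mu|\Omega|^{1-m}\min\{1,M_0^m\}\,t}$, so the exponent $\min\{1,M_0^m\}$ as printed in the lemma is accurate only under the normalization $\mu=|\Omega|=1$; this is an imprecision in the paper's statement (inherited from the cited source) rather than a gap in your proof, and it is harmless for the way the lemma is used later, where only the uniform bounds on $M(t)$ and the boundedness of $|1-M(t)|$ matter.
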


\begin{lemma}[Maximal Sobolev regularity \cite{xcao, hieber}]\label{l2}
Let $r\in(1,\infty)$ and $T\in (0, \infty)$. Consider the following evolution equation
\begin{align*}
	\left\{
	\begin{array}{rrll}
		&&y_t=\Delta y- y+g,\hspace*{2cm}& x\in \Omega,\: t>0,\\
		&&\frac{\partial y}{\partial\nu}=0, &x\in \partial\Omega,\: t>0,\\
		&&y(x, 0)=y_0(x), & x\in\Omega.
	\end{array}
	\right.
\end{align*}
For each $y_0\in{ \mathcal{W}^{2,r}}(\Omega)$ such that $\frac{\partial y_0(x)}{\partial\nu}=0$ on $\partial\Omega$ and any $g\in { \mathcal{L}^{r}}\big((0,T);{ \mathcal{L}^{r}}(\Omega)\big)$, there exists a unique solution
\begin{align*}
	y \in { \mathcal{W}^{1,r}}\big((0,T);{ \mathcal{L}^r}(\Omega)\big)\cap{  \mathcal{L}^r}\big((0,T); { \mathcal{W}^{2,r}}(\Omega)\big).
\end{align*}
Moreover, there exists $C_r>0$ such that
\begin{align*}
	\int_0^T\ints |y|^r+\int_0^T\ints |y_t|^r+\int_0^T\ints |\Delta y|^r\leq C_r\int_0^T\ints |g|^r+C_r\ints |y_0|^r+C_r\ints |\Delta y_0|^r.
\end{align*}
If $s_0\in(0,T)$ and $y(\cdot,s_0)\in{ \mathcal{W}^{2,r}}(\Omega)$ with $\frac{\partial y(\cdot,s_0)}{\partial\nu}=0$ on $\partial\Omega$ we have
\begin{align*}
	\intT\ints e^{sr}|\Delta y|^r\leq C'_r\intT\ints e^{sr}|g|^r+C'_r\ints |y(\cdot,s_0)|^r+C'_r\ints |\Delta y(\cdot,s_0)|^r,
\end{align*}
where $C_r'>0$.
\end{lemma}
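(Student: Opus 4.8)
The plan is to recast the scalar parabolic problem as an abstract Cauchy problem and then import the theory of maximal $L^r$-regularity for the Neumann Laplacian. Concretely, I set $X:=L^r(\Omega)$ and define the operator $A:=-\Delta+1$ with domain $D(A)=\{\varphi\in W^{2,r}(\Omega):\partial_\nu\varphi=0 \text{ on } \partial\Omega\}$, so that the initial–boundary value problem is equivalent to $y'(t)+Ay(t)=g(t)$ on $(0,T)$ with $y(0)=y_0$. Since $\Omega$ is bounded with smooth boundary, the shifted Neumann Laplacian $A$ is a densely defined, positive, sectorial operator generating a bounded analytic $C_0$-semigroup $(e^{-tA})_{t\ge0}$ on $X$; the strictly positive shift $+1$ guarantees $0\in\rho(A)$, so the resolvent estimates hold up to the imaginary axis.

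The decisive analytic input — which is exactly the content of the cited references \cite{xcao, hieber} — is that $A$ admits a bounded $H^\infty$-functional calculus (equivalently, is $\mathcal{R}$-sectorial) on the UMD space $L^r(\Omega)$ for every $r\in(1,\infty)$. By Weis's characterisation of maximal regularity through $\mathcal{R}$-sectoriality (or, classically, by the Dore–Venni theorem applied to the bounded imaginary powers of $A$), the operator $A$ enjoys maximal $L^r$-regularity on every finite interval $(0,T)$. Hence for each $g\in L^r\big((0,T);L^r(\Omega)\big)$ and each $y_0$ in the real-interpolation trace space $(L^r(\Omega),D(A))_{1-\frac1r,r}$ there is a unique $y\in W^{1,r}\big((0,T);L^r(\Omega)\big)\cap L^r\big((0,T);D(A)\big)$ solving the abstract problem, together with the estimate
\[
\|y_t\|_{L^r((0,T);L^r(\Omega))}+\|Ay\|_{L^r((0,T);L^r(\Omega))}\le C_r\big(\|g\|_{L^r((0,T);L^r(\Omega))}+\|y_0\|_{(L^r(\Omega),D(A))_{1-\frac1r,r}}\big).
\]
Because $y_0\in W^{2,r}(\Omega)=D(A)$ under the Neumann compatibility, it lies a fortiori in the trace space, which yields existence and uniqueness at once.

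To descend to the concrete inequality I invoke elliptic regularity for the Neumann problem, namely the norm equivalence $\|\varphi\|_{W^{2,r}(\Omega)}\simeq\|\varphi\|_{L^r(\Omega)}+\|\Delta\varphi\|_{L^r(\Omega)}$ on $D(A)$. This converts $\|Ay\|_{L^r}$ into control of $\ints|\Delta y|^r$ and bounds the trace norm of $y_0$ by $\ints y_0^r+\ints|\Delta y_0|^r$. The remaining zeroth-order term is handled through the Duhamel representation $y(t)=e^{-tA}y_0+\int_0^t e^{-(t-s)A}g(s)\,\ds$ and contractivity of the analytic semigroup, giving $\sup_{t\le T}\|y(t)\|_{L^r}\le\|y_0\|_{L^r}+\int_0^T\|g(s)\|_{L^r}\,\ds$, so that $\intT\ints y^r$ is dominated by the right-hand side after Hölder's inequality. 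Combining these reductions produces the first stated estimate.

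For the weighted estimate I use the substitution $z(\cdot,s):=e^{s}y(\cdot,s)$; a direct computation shows the $-y$ term cancels and $z$ solves the pure heat equation $z_t=\Delta z+e^{s}g$ on $(s_0,T)$ with Neumann boundary condition and $z(\cdot,s_0)=e^{s_0}y(\cdot,s_0)$. Applying maximal $L^r$-regularity for $-\Delta$ on the finite horizon $(s_0,T)$ — valid since $-\Delta$ generates a bounded analytic semigroup on $L^r(\Omega)$ and maximal regularity over a bounded interval is insensitive to the zero Neumann eigenvalue — and retaining only the $\Delta z$ term gives $\intT\ints|\Delta z|^r\le C_r\intT\ints e^{sr}|g|^r+C_r\|z(\cdot,s_0)\|^r_{\mathrm{trace}}$. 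Since $\Delta z=e^{s}\Delta y$ and, again by elliptic regularity, the trace norm of $z(\cdot,s_0)$ is controlled (after absorbing $e^{s_0}$ into $C_r$) by $\ints y(\cdot,s_0)^r+\ints|\Delta y(\cdot,s_0)|^r$, this is precisely the claimed weighted inequality. The one genuinely deep ingredient, and thus the main obstacle, is the $\mathcal{R}$-sectoriality / bounded $H^\infty$-calculus of the Neumann Laplacian on $L^r(\Omega)$; everything else is assembly, so I import that theorem from \cite{xcao, hieber} rather than reprove it.
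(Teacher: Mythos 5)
The paper offers no proof of this lemma at all: it is quoted as a known result, with the analytic content attributed to the cited works of Hieber--Pr\"uss (abstract maximal $L^r$-regularity for the Neumann Laplacian) and Cao (the variant with the exponential weight). So the only meaningful comparison is between your reconstruction and the proofs in those references, and on that score your proposal is sound and follows essentially the same route they do: maximal regularity for the shifted operator $A=-\Delta+1$ via $\mathcal{R}$-sectoriality/bounded $H^\infty$-calculus and Weis (or Dore--Venni), elliptic regularity to trade $\|Ay\|_{L^r(\Omega)}$ for $\|y\|_{L^r(\Omega)}+\|\Delta y\|_{L^r(\Omega)}$, membership of $y_0\in W^{2,r}(\Omega)$ (with the Neumann condition) in the trace space $\big(L^r(\Omega),D(A)\big)_{1-\frac{1}{r},r}$, and, for the weighted inequality, the substitution $z=e^{s}y$ that cancels the $-y$ term and reduces matters to the pure heat equation --- this last device is exactly the trick used in Cao's paper. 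Treating the $\mathcal{R}$-sectoriality of the Neumann Laplacian on $L^r(\Omega)$ as an imported black box is legitimate here, since the paper itself imports strictly more.

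Two of your assembly steps deserve a sharper treatment, because the downstream application needs the constant $C_r$ to be independent of $T$: the $L^p$ bound on $u$ in Lemma \ref{l4}, which rests on \eqref{l4.14}--\eqref{l4.15}, must hold uniformly for $t\in(s_0,T_{\mathrm{max}})$ even when $T_{\mathrm{max}}=\infty$, and this is what ultimately gives boundedness for all $t>0$ in Theorem \ref{t1}. First, your estimate of the zeroth-order term by contractivity plus H\"older produces a factor of $T$; replace it by Young's convolution inequality, using $\|y(t)\|_{L^r(\Omega)}\le e^{-t}\|y_0\|_{L^r(\Omega)}+\int_0^t e^{-(t-s)}\|g(s)\|_{L^r(\Omega)}\,\mathrm{d}s$ and the integrability of the kernel $e^{-t}$ on $(0,\infty)$. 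Second, maximal regularity for the heat equation satisfied by $z$ should be invoked with a $T$-independent constant; this is available because $-\Delta$ with Neumann conditions is $\mathcal{R}$-sectorial on $L^r(\Omega)$ uniformly down to $\lambda=0$ (split off the rank-one projection onto constants), and only the terms $\|z_t\|$ and $\|\Delta z\|$ --- not $\|z\|$ itself --- are needed there. Note also that absorbing $e^{s_0 r}$ into $C_r$ in the initial-data terms is harmless only because the paper always takes $s_0<1$; for general $s_0\in(0,T)$ those terms should carry the weight. With these adjustments your argument yields the lemma in the uniform form in which the paper actually uses it.
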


\begin{lemma}[Extensibility criterion]\label{l5}
Let $n\geq 2, s_0\in (0, \tmax)$ with $s_0<1$ and the initial data $\bu_0$, $\bv_0$ and $\bw_0$ satisfy \eqref{3} for some $q >n$. Moreover, the function f fulfill \eqref{3.1} with $ l+k<1+\frac{2}{n}$, where $k\geq 1$ and $1<m< 1+\frac{2}{n}$. Suppose that there exists $p>\frac{n}{2}\geq 1$ such that
\begin{align*}
	\ssup\nut_{\lps}< \infty,
\end{align*}
then, we have
\begin{align*}
	\sup_{t>0}\Big(\nut_{\lis}+\nvt_{\wsq}+\nwt_{\wsq}\Big)< \infty.
\end{align*}
\end{lemma}
\begin{proof}
Let $q>n$ and for each fixed $p>\frac{n}{2}$ there holds
\begin{align*}
	\frac{np}{(n-p)_+}=
	\left\{
	\begin{array}{rrll}
		\hspace*{-2cm}&&\infty,  &\quad \mbox{if}\quad p\geq n,\\\\
		\hspace*{-2cm}&&\frac{np}{n-p}>n, & \quad \mbox{if}\quad \frac{n}{2}< p < n,
	\end{array}
	\right.
\end{align*}
and choose $q<\frac{np}{(n-p)_+}$ and $1<q_0<q$ fulfilling $n<q_0<\frac{np}{(n-p)_+}$
which enables to choose $\sigma>1$, \: $n < \sigma  q_0<\frac{np}{(n-p)_+}$ and $\sigma q_0 \leq q$. We fix arbitrary $s_0 \in (0, \tmax)$ with $s_0 < 1$ and $t\in (s_0, \tmax)$. Applying the variation of constants formula to the second equation of $\eqref{2}$, we get
\begin{align*}
	\bvt=e^{-\alpha t} e^{d_2t\Delta}\bv_0+\intts\eats\evtsl f\big(u(\cdot, s)\big)\ds.
\end{align*}
Now,
\begin{align*}
	\ngvt_{\lros}\leq&\: e^{-\alpha t}\big\|\nabla e^{d_2t\Delta}\bv_0\big\|_{\lros}\\
	&+ K\intts\eats\Big\|\nabla\evtsl\bul(\cdot,s)\Big\|_{\lros}\ds.
\end{align*}
By use of the Neumann heat semigroup \cite{winkler} with $q_0\sigma \leq q$, we attain
\begin{align}
	\ngvt_{\lros} \leq&\: C_1 e^{-\alpha t}\|\bv_0\|_{\wsq}\nonumber\\
	&+C_2\intts\eats\left(1+\left(t-s\right)^{-\frac{1}{2}-\frac{n}{2}\left(\frac{1}{p}-\frac{1}{q_0\sigma}\right)}\right)e^{-d_2\lambda(t-s)}\big\|\bul(\cdot,s)\big\|_{\lps}\ds,\label{l5.4}
\end{align}
where $C_1$ and $C_2$ are positive constants. Here, we can guarantee that $\frac{1}{2}+\frac{n}{2}\left(\frac{1}{p}-\frac{1}{q_0\sigma}\right) <1$, because of our assumption $q_0 \sigma<\frac{np}{(n-p)}$.
	Now, adopting the Gamma function, we ensure $\inti e^{-\alpha\psi}\left(1+\psi^{-\frac{1}{2}-\frac{n}{2}(\frac{1}{p}-\frac{1}{q_0\sigma})}\right)e^{-d_2\lambda\psi} < \infty$. Using the H\"older's inequality, to get
	\begin{align}
		\big\|\bul(\cdot, t)\big\|_{\lps}= \left(\int_\Omega u^{l p}(\cdot, t)\right)^\frac{l}{p}\leq  |\Omega|^{\frac{1-l}{p}}\left(\int_\Omega u^p(\cdot, t)\right)^\frac{l}{p}\leq C_3\|\bu(\cdot, t)\big\|^l_{\lps}, \quad \forall t\in (s_0, \tmax), \label{l5.5}
	\end{align}
	where $l\in(0,1)$ and $C_3>0$. 
	Substituting \eqref{l5.5} in to \eqref{l5.4}, we arrive at
	\begin{align*}
		\ngvt_{\lros}&\leq C_1\|\bv_0\|_{\wsq}+ C_4, \qquad \forall t\in (s_0, \tmax)
	\end{align*}
	where $C_4$ is positive constant.
	Finally, we obtain with the help of Lemma \ref{l1} and $s_0<1$
	\begin{align}
		\ngvt_{\lros}&\leq C_5, \qquad\qquad \forall\, t\in (0, \tmax).\label{l5.6}
	\end{align}
	Like wise we obtain,
	\begin{align}
		\big\|\gw(\cdot,t)\big\|_{\lros}&\leq C_6, \qquad\qquad \forall\, t\in (0, \tmax).\label{l5.6.1}
	\end{align}
	where $C_5$ and $C_6>0$. Set $t\in (0, \tmax)$ and we attain the following representation by adopting the variation of constants formula
	\begin{align*}
		\but=&e^{d_1t\Delta}\bu(\cdot,0)-\chi\intt\eutsl\nabla\cdot\Big(\buks\gvs\Big)\ds\nonumber\\
		&+\xi\intt\eutsl\nabla\cdot\Big(\buks\gws\Big)\ds\nonumber\\
		&+\mu\intt\eutsl\bums\Big(1-\ints\bus\Big)\ds.
	\end{align*}
	The above representation yields
	\begin{align*}
		\nut_{\lis}\leq &\: \Big\|e^{d_1t\Delta}\bu(\cdot,0)\Big\|_{\lis}+\chi\intt\Big\|\nabla\eutsl\Big(\buks\gvs\Big)\Big\|_{\lis}\ds\\
		&+\xi\intt\Big\|\nabla\eutsl\Big(\buks\gws\Big)\Big\|_{\lis}\ds\\
		&+\mu\intt\left\|\eutsl\bums\big(1-M(s)\big)\right\|_{\lis}\ds,
	\end{align*}
	for all $t\in(0,\tmax)$. Now, applying the Neumann heat semigroup \cite{winkler} with $C_7, C_8$ and $C_9 > 0$ satisfying
	\begin{align}
		\nut_{\lis}\leq &\:\big\|\bu(\cdot,0)\big\|_{\lis}\nonumber\\
		&+ C_7\intt\left(1+\left(t-s\right)^{-\frac{1}{2}-\frac{n}{2}\left(\frac{1}{q_0}-\frac{1}{\infty}\right)}\right)e^{-d_1\lambda(t-s)}\Big\|\buks\gvs\Big\|_{\lrs}\ds\nonumber\\
		&+ C_8\intt\left(1+\left(t-s\right)^{-\frac{1}{2}-\frac{n}{2}\left(\frac{1}{q_0}-\frac{1}{\infty}\right)}\right)e^{-d_1\lambda(t-s)}
		\Big\|\buks\gws\Big\|_{\lrs}\ds\nonumber\\
		&+|1-M_0|C_{9}\intt\left(1+\left(t-s\right)^{-\frac{n}{2}\left(\frac{1}{q_0}-\frac{1}{\infty}\right)}\right)e^{-d_1\lambda(t-s)}\Big\|\bums\Big\|_{\lrs}\ds,\label{l5.8}
	\end{align}
	where $\frac{1}{2}+\frac{n}{2q_0}<1$ because of $q_0>n$. Employing the Gamma function again,  the integral $\inti\left(1+\psi^{-\frac{1}{2}-\frac{n}{2q_0}}\right)e^{-\lambda\psi} < \infty.$ The H\"older's inequality and the Interpolation inequality gives us that
	\begin{align*}
		\big\|\buks\gvs\big\|_{\lrs} &\leq \big\|\buks\big\|_{ L^{q_0\widehat{\sigma}}(\Omega)}\; \big\|\gvs\big\|_{\lros}\nonumber\\
		&= \big\|\bus\big\|^k_{ L^{q_0\widehat{\sigma}k}(\Omega)}\; \big\|\gvs\big\|_{\lros}\nonumber\\
		&\leq \Big\|\bus\Big\|_{\los}^{k\zeta_2}\; \Big\|\bus\Big\|_{\lis}^{k(1-\zeta_2)}\; \big\|\gvs\big\|_{\lros}\nonumber\\
		&= \Big\|\bus\Big\|_{\los}^{\frac{1}{q_0\widehat{\sigma}}}\; \Big\|\bus\Big\|_{\lis}^{k-\frac{1}{q_0\widehat{\sigma}}}\; \big\|\gvs\big\|_{\lros}\nonumber\\
		&\leq C_{10}\Big\|\bus\Big\|_{\lis}^{k-\frac{1}{q_0\widehat{\sigma}}},
	\end{align*}
	where $C_{10}>0$. Now, using the Young's inequality with $q_0\widehat{\sigma}>\frac{n}{2}$ and $C_{11}>0$, gives
	\begin{align}
		\big\|\buks\gvs\big\|_{\lrs} 
		&\leq \frac{1}{6}\big\|\bus\big\|_{\lis}+C_{11}.\label{l5.9}
	\end{align}
	Similarly, we can attain with $C_{12}>0$
	\begin{align}
		\big\|\buks\gws\big\|_{\lrs}&\leq \frac{1}{6}\big\|\bus\big\|_{\lis}+C_{12},\label{l5.10}
	\end{align}
	where $\widehat{\sigma}$ is the dual exponent of $\sigma$ and $\displaystyle{\zeta_2=\frac{1}{q_0\widehat{\sigma}k} \in (0,1)}$, for all $s\in (0, \tmax)$. Also using the Interpolation inequality to the last term in \eqref{l5.8}, we get
	\begin{align*}
		\big\|\bums\big\|_{\lrs}=&\Big\|\bus\Big\|^m_{\mathcal{L}^{q_0 m}(\Omega)}\nonumber\\
		\leq& \Big\|\bus\Big\|^{m\zeta_3}_{\los}\:\Big\|\bus\Big\|^{m(1-\zeta_3)}_{\lis}\nonumber\\
		\leq& C_{13}\Big\|\bus\Big\|_{\lis}^{m-\frac{1}{q_0}},
	\end{align*}
	where $\displaystyle{\zeta_3=\frac{1}{q_0 m} \in (0,1)}$, for all $s\in (0, \tmax)$ and $C_{13}>0$. Once again the Young's inequality with $q_0>\frac{n}{2}$, gives
	\begin{align}
		\big\|\bums\big\|_{\lrs}
		\leq& C_{13}\Big\|\bus\Big\|_{\lis}^{m-\frac{1}{q_0}}\leq\frac{1}{6}\big\|\bus\big\|_{\lis}+C_{14},\label{l5.11}
	\end{align}
	where $C_{14}>0$. Inserting \eqref{l5.9}-\eqref{l5.11} in \eqref{l5.8}, it follows that
	\begin{align*}
		\sup_{t>0}\nut_{\lis}\leq \frac{1}{2}\sup_{t>0}\nut_{\lis}+C_{15},
	\end{align*}
	for all $t\in(0,\tmax)$, where $C_{15}>0$. Finally, we conclude that
	\begin{align}
		\nut_{\lis}\leq C_{16}, \qquad\qquad\qquad\forall \, t\in(0,\tmax).\label{l5.14}
	\end{align}
	where the constant $C_{16}$ is positive. Initial data regularity $v_0, w_0\in \wsq$
	and the smoothing property of the heat semigroup ensure $v, w\in \mathcal{L}^\infty((0, \tmax); \wsq)$. This will imply that actually 
	\begin{align*}
		\sup_{t>0}\Big(\nut_{\lis}+\nvt_{\wsq}+\nwt_{\wsq}\Big)< \infty.
	\end{align*}
	This completes the proof.
\end{proof}

\section{Global existence of classical solutions}
\quad In this section, we prove the global existence and boundedness of the solution to \eqref{2}. First we derive $\lps$ norm for $\bu$. Given any $s_0\in (0,\tmax)$ such that $s_0 < 1$, Lemma \ref{l1} gives  $\bu(\cdot,s_0), \bv(\cdot,s_0), \bw(\cdot,s_0)\in\cts$ with $\frac{\partial\bv(\cdot, s_0)}{\partial\nu}=0$ and $\frac{\partial\bw(\cdot, s_0)}{\partial\nu}=0$, we pick $C>0$ such that 
{\small
	\begin{align}
		\left\{
		\begin{array}{rrll}
			\hspace*{-0.5cm}&&\sup\limits_{0\leq s\leq s_0}\|\bus\|_{\lis}\leq C, \quad \sup\limits_{0\leq s\leq s_0}\|\bvs\|_{\lis}\leq C, \quad \sup\limits_{0\leq s\leq s_0}\|\bws\|_{\lis}\leq C, \\\\
			\hspace*{-0.5cm}&&\|\Delta \bv(\cdot, s_0)\|_{\lis}\leq C \quad \mathrm{and} \quad \|\Delta \bw(\cdot, s_0)\|_{\lis}\leq C.
		\end{array}
		\right.\label{l4.0}
\end{align}}
Next, we derive boundedness in $t\in (s_0,\tmax)$.

\begin{lemma}\label{l3}
Suppose that $\Omega\subset\rsn\: (n\geq 2)$, is a bounded domain with smooth boundary. Assume that $d_1>0, \mu >0$ and $M_0<1$. If $ m$ satisfies
	\begin{align*}
		&1< m < 1+\frac{2}{n},\quad n\geq 3,\\
		&1< m < 2,\quad\qquad n=2,
	\end{align*} 
then for any $1<p<\infty$, we have
	\begin{align*}
		\ints(1-M_0)\upm &\leq \frac{2d_1(p-1)}{p^2\mu}\ints\mgufpt^2+C,
	\end{align*}
	where $C>0$.
\end{lemma}
\begin{proof}
Choosing $1<p'<p+m-1$ and using the H\"older's inequality, to get
\begin{align}
	\ints\upm	\leq&\left(\ints\left(\bu^{\frac{p}{2}}\right)^{\frac{2\gamma(p+m-1)}{p}\frac{rp}{2\gamma(p+m-1)}}\right)^{\frac{2\gamma(p+m-1)}{rp}} \nonumber\\
		&\times\left(\ints\left(\bu^{\frac{p}{2}}\right)^{\togpmp\frac{p}{2(1-\gamma)(p+m-1)}\frac{2p'}{p}}\right)^{\togpmp\frac{p}{2p'}}\nonumber\\
		\leq&\:\nufpt^{\tgpmp}_{{ \mathcal{L}^{r}}(\Omega)}\:\:\nufpt^{\togpmp}_{\ltpps},\label{l3.3}
	\end{align}
		where 
	\begin{align}
		\gamma&= \frac{\frac{p}{2p'}-\frac{p}{2(p+m-1)}}{\frac{p}{2p'}-\frac{1}{r}}\in (0, 1),\label{l3.5}
	\end{align}
is the exponent from the H\"older's inequality	and $r$ satisfies
	\begin{align*}
		r=
		\left\{
		\begin{array}{rrll}
			\hspace*{-5cm}&&\frac{2n}{n-2},  &\quad \mbox{if}\quad n\geq 3,\\\\
			\hspace*{-5cm}&&\left(\frac{2(p+m-1)}{p}, \infty\right), & \quad \mbox{if}\quad n=2,\\\\
			\hspace*{-5cm}&&\infty, & \quad \mbox{if}\quad n=1.
		\end{array}
		\right.
	\end{align*}
	
	Using the Sobolev embedding theorem, we get the estimate from \eqref{l3.3} with $C_1>0$
	\begin{align}
		\ints\upm 
		\leq& C_1\Bigg(\nufpt^{\gamma}_{\lts}\nufpt^{(1-\gamma)}_{\ltpps}+\ngufpt^{\gamma}_{\lts}\nufpt^{(1-\gamma)}_{\ltpps}\Bigg)^{\frac{2(p+m-1)}{p}}\nonumber\\
		\leq& C_1\nufpt^{\tgpmp}_{\lts}\nufpt^{\togpmp}_{\ltpps}+C_1\ngufpt^{\tgpmp}_{\lts}\nufpt^{\togpmp}_{\ltpps}.\label{l3.4}
	\end{align}

	Using the Young's inequality to the second term in R.H.S of \eqref{l3.4}, we obtain
	\begin{align}
		C_1\ngufpt^{\tgpmp}_{\lts}\nufpt^{\togpmp}_{\ltpps}\leq& \:\frac{2d_1(p-1)}{p^2\mu} \Bigg(\ngufpt^{\tgpmp}_{\lts}\Bigg)^{\frac{p}{\gamma(p+m-1)}}\nonumber\\
		&+C(d_1,p,\mu)\Bigg(C_1\nufpt^{\togpmp}_{\ltpps}\Bigg)^{\frac{1}{1-\frac{\gamma(p+m-1)}{p}}}\nonumber\\\nonumber\\
		\leq&\: \frac{2d_1(p-1)}{p^2\mu} \ngufpt^2_{\lts}+C_2\nufpt^a_{\ltpps},\label{l3.6}
	\end{align}
	where $a=\togpmp\frac{1}{1-\frac{\gamma(p+m-1)}{p}}$ and $C_2>0$. 
	Take $p'>\frac{n(m-1)}{2}$ with simple computation, we ensure that
	\begin{align*}
		\frac{2\gamma(p+m-1)}{p}<& \frac{\frac{pn}{p'}+2-n}{\frac{1}{2}\left(\frac{pn}{p'}+2-n\right)}<2.
	\end{align*}
	Substituting \eqref{l3.6} in \eqref{l3.4}, we obtain
	\begin{align}
		\ints\upm\leq& \frac{2d_1(p-1)}{p^2\mu} \ngufpt^2_{\lts}+C_2\nufpt^a_{\ltpps}
		+C_1\nufpt^{\tgpmp}_{\lts}\nufpt^{\togpmp}_{\ltpps}.\label{l3.7}
	\end{align}
Now, we estimate the terms in \eqref{l3.7} as follows
	\begin{align}
		(i)\quad &\nufpt^a_{\ltpps}= \Bigg(\ints\Big(\bu^{\frac{p}{2}}\Big)^{\frac{2p'}{p}}\Bigg)^{\frac{p}{2p'}a} = \nru^{\frac{pa}{2}}_{\lpos},\label{l3.8}\\
		(ii)\quad &\nufpt^{\tgpmp}_{\lts} = \Bigg(\ints\Big(\bu^{\frac{p}{2}}\Big)^2\Bigg)^{\frac{1}{2}\tgpmp} = \nru^{\gamma(p+m-1)}_{\lps},\label{l3.9}\\
		(iii)\quad &\nufpt^{\togpmp}_{\ltpps} = \Bigg(\ints\Big(\bu^{\frac{p}{2}}\Big)^{\frac{2p'}{p}}\Bigg)^{\frac{p}{2p'}\togpmp} = \nru^{(1-\gamma)(p+m-1)}_{\lpos}.\label{l3.10}
	\end{align}
	Substituting \eqref{l3.8}-\eqref{l3.10} into \eqref{l3.7}, we get
	\begin{align}
		\ints\upm\leq \frac{2d_1(p-1)}{p^2\mu} \ngufpt^2_{\lts}+C_2\nru^{\frac{pa}{2}}_{\lpos}+C_1\nru^{\gamma(p+m-1)}_{\lps}\: \nru^{(1-\gamma)(p+m-1)}_{\lpos}.\label{l3.11}
	\end{align}
	On the other hand, using the H\"older's inequality with $1<p'<p+m-1$, we have
	\begin{align*}
		\nru_{\lpos}\leq& C_3\Bigg(\ints\bu^{\theta p'\frac{p+m-1}{\theta p'}}\Bigg)^{\frac{\theta p'}{p'(p+m-1)}} \Bigg(\ints\bu^{(1-\theta) p'\frac{1}{(1-\theta) p'}}\Bigg)^{\frac{(1-\theta) p'}{p'}}
		\leq  C_3\nru^{\theta}_{\lpms}\nru^{(1-\theta)}_{\los},
	\end{align*}
	where 
	$\theta = \frac{(p'-1)(p+m-1)}{p'(p+m-2)}\in (0,1)$ and $C_3>0$. Hence, we obtain 
	\begin{align}
		C_2\nru^{\frac{pa}{2}}_{\lpos}
		\leq C_4 \nru^{\frac{pa\theta}{2}}_{\lpms} \qquad \text{and} \qquad \nru^{(1-\gamma)(p+m-1)}_{\lpos} 
		\leq C_5\nru^{\theta(1-\gamma)(p+m-1)}_{\lpms}.\label{l3.12}
	\end{align}
	Similar way, we obtain
	\begin{align}
		\nru_{\lps}\leq& C_6\Bigg(\ints\bu^{\eta p\frac{p+m-1}{\eta p}}\Bigg)^{\frac{\eta p}{p(p+m-1)}} \Bigg(\ints\bu^{(1-\eta) p\frac{1}{(1-\eta) p}}\Bigg)^{\frac{(1-\eta) p}{p}}
		\leq  C_6\nru^{\eta }_{\lpms}\nru^{(1-\eta) }_{\los},\nonumber\\
		\leq& C_7\nru^{\eta\gamma(p+m-1)}_{\lpms}\label{l3.13}
	\end{align}
	with 
	$\eta = \frac{(p-1)(p+m-1)}{p(p+m-2)}\in (0,1)$ and $C_i>0, i=4,5,6,7$.
	Substituting \eqref{l3.12} and \eqref{l3.13} in \eqref{l3.11}, finally we achieve
	\begin{align}
		\ints\upm\leq& \frac{2d_1(p-1)}{p^2\mu} \ngufpt^2_{\lts}+C_5\nru^{\frac{pa\theta}{2}}_{\lpms}+C_8\nru^{\eta\gamma(p+m-1)}_{\lpms}\nru^{\theta(1-\gamma)(p+m-1)}_{\lpms}\nonumber\\\nonumber\\
		=& \frac{2d_1(p-1)}{p^2\mu} \ngufpt^2_{\lts}+C_5\nru^{\frac{pa\theta}{2}}_{\lpms}+C_8\nru^{[\eta\gamma+\theta(1-\gamma)](p+m-1)}_{\lpms},\label{l3.15}
	\end{align}
where $C_8>0$. Again, using the Young's inequality with $C_9, C_{10}>0$, the second term can be written as
	\begin{align}
		C_5\nru^{\frac{pa\theta}{2}}_{\lpms}\leq \frac{M_0}{2} \nru^{\frac{pa\theta}{2}\frac{2(p+m-1)}{pa\theta}}_{\lpms}+C(M_0)C_9
		\leq \frac{M_0}{2} \nru^{p+m-1}_{\lpms}+C_{10}.\label{l3.16}
	\end{align}
	By simple computations, we calculate
	\begin{align}
		\gamma <& \frac{\frac{p}{p'}-\frac{p}{p+m-1}}{\frac{p}{p'}-(2-m)}.\label{l3.17}
	\end{align}
	This guarantee that $\frac{pa\theta}{2(p+m-1)}<1$.
	Once again, using the Young's inequality with $C_{11}, C_{12}>0$ to the third term of \eqref{l3.15}, the estimates gives
	\begin{align}
		C_8\nru^{[\eta\gamma+\theta(1-\gamma)](p+m-1)}_{\lpms}\leq &\frac{M_0}{2} \left( \nru^{[\eta\gamma+\theta(1-\gamma)](p+m-1)}_{\lpms}\right)^{\frac{1}{\eta\gamma+\theta(1-\gamma)}}+C(M_0)C_{11}\nonumber\\
		\leq & \frac{M_0}{2} \nru^{p+m-1}_{\lpms}+C_{12},\label{l3.19}
	\end{align}
	where $\eta\gamma+\theta(1-\gamma)<1$ because of the definition of $\gamma$, $\theta$ and $\eta$. 
	Substituting \eqref{l3.16} and \eqref{l3.19} in \eqref{l3.15}, we obtain that
	\begin{align*}
		\ints\upm &\leq \frac{2d_1(p-1)}{p^2\mu} \ngufpt^2_{\lts}+\frac{M_0}{2} \nru^{p+m-1}_{\lpms}+\frac{M_0}{2} \nru^{p+m-1}_{\lpms}+C_{13},
	\end{align*}
	where the constant $C_{13}>0$. Finally, we obtain
	\begin{align*}
		\ints(1-M_0)\upm &\leq \frac{2d_1(p-1)}{p^2\mu}\ints\mgufpt^2+C_{13}.
	\end{align*}
	Now, we will derive the condition to the first case. 
	For $n\geq 3$ and $r=\frac{2n}{n-2}$, we have from \eqref{l3.5}
	\begin{align}
		\gamma&= \frac{\frac{pn}{2p'}-\frac{pn}{2(p+m-1)}}{\frac{pn}{2p'}+1-\frac{n}{2}}=\frac{\frac{p}{p'}-\frac{p}{p+m-1}}{\frac{p}{p'}-1+\frac{2}{n}}.\label{l3.18}
	\end{align}
	Comparing the gamma values \eqref{l3.17} with \eqref{l3.18}, we acquire
	\begin{align*}
		m< 1+\frac{2}{n}.
	\end{align*}
	Next, for the case $n=2$, we have
	\begin{align*}
		\gamma=\frac{\frac{p}{p'}-\frac{p}{p+m-1}}{\frac{p}{p'}-\frac{2}{r}}.
	\end{align*}
Now \eqref{l3.17} holds true for the case $n=2$ whenever $m<2-\frac{2}{r}$. 
This completes the proof.
\end{proof}

\begin{lemma}\label{l4}
Suppose that $\Omega\subset\rsn\: (n\geq 2)$, is a bounded domain with smooth boundary. Moreover, let $f$ fulfills \eqref{3.1} and for any $p > 1$, if $l+k < 1+\frac{2}{n}$, where $k\geq 1$ and $1<m< 1+\frac{2}{n}$, then we have
	\begin{align}
		\hspace*{3cm}\nut_{\lps}\leq C, \hspace*{3cm} \forall \, t\in(s_0,\tmax),\label{l4.1}
	\end{align}
	for some $C>0$.
\end{lemma}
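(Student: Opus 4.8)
The plan is to derive a differential inequality for the functional $\ints\up$ and to close it by playing the nonlocal logistic source and the chemotaxis cross-terms against the diffusion dissipation. First I would test the first equation of \eqref{2} with $p\upo$ and integrate by parts over $\Omega$, discarding all boundary contributions through the Neumann conditions. The diffusion term then produces the dissipation $-\frac{4d_1(p-1)}{p}\ngufpt^2_{\lts}$; the attraction and repulsion terms, after integrating by parts once more and using $\bu^{p+k-2}\gu=\frac{1}{p+k-1}\nabla\upk$, combine into the coupling $\frac{p(p-1)}{p+k-1}\ints\upk\left(-\chi\lv+\xi\lw\right)$; and the source contributes $p\mu\left(1-M(t)\right)\ints\upm$.

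The first genuine step is to absorb the logistic source. Since $M_0<1$, Lemma \ref{l0} gives $0\leq 1-M(t)\leq 1-M_0$, so this term is nonnegative and dominated by $p\mu(1-M_0)\ints\upm$. Feeding this into \eqref{l3.2} of Lemma \ref{l3} converts it into $\frac{2d_1(p-1)}{p}\ngufpt^2_{\lts}+C$, precisely half of the available dissipation, which is exactly why the constant $\frac{2d_1(p-1)}{p^2\mu}$ was isolated there. I am then left with a net dissipation $-\frac{2d_1(p-1)}{p}\ngufpt^2_{\lts}$ on the right-hand side.

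Next I would control the chemotaxis coupling. Estimating $\ints\upk\mlv$ by H\"older's inequality in a conjugate pair $(r',r)$ splits it as $\|\upk\|_{\mathcal{L}^{r'}(\Omega)}\|\lv\|_{\mathcal{L}^{r}(\Omega)}$, and the first factor is handled by the Gagliardo--Nirenberg inequality applied to $\bu^{p/2}$, with the uniform $\los$ mass bound of Lemma \ref{l0} as the low-order norm. The hypothesis $l+k<1+\frac{2}{n}$ is exactly what keeps the resulting interpolation exponent subcritical, so that Young's inequality absorbs the $\ngufpt^2_{\lts}$ contribution into the remaining dissipation and leaves only a suitable power of $\|\lv\|_{\mathcal{L}^{r}(\Omega)}$; the repulsion term with $\lw$ is treated identically. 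To estimate these Laplacian factors I would apply the maximal Sobolev regularity of Lemma \ref{l2} to the second and third equations with $g=f(\bu)$, which by \eqref{3.1} bounds the space-time integrals of $|\lv|^{r}$ and $|\lw|^{r}$ by that of $\bu^{lr}$ plus data, the latter again being subcritical under $l+k<1+\frac{2}{n}$.

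To convert the pointwise-in-time energy inequality into a uniform bound, I would use Gagliardo--Nirenberg once more, together with the mass bound, to generate a damping term $+\kappa\ints\up$, reaching an inequality of the form $\dt\ints\up+\kappa\ints\up\leq h(t)+C$ in which $h$ collects the Laplacian contributions. Multiplying by the integrating factor and matching it against the exponentially weighted estimate of Lemma \ref{l2}, a Gr\"onwall argument on $(s_0,\tmax)$ finally yields $\ints\up\leq C$, which is \eqref{l4.1}. The main obstacle is precisely this last reconciliation: the energy identity for $\ints\up$ holds pointwise in time, whereas $\lv$ and $\lw$ are available only in a space-time averaged sense through maximal regularity, so the delicate part is the exponent bookkeeping that keeps every Gagliardo--Nirenberg and Young step strictly subcritical, so that the coupled estimate closes without circularity.
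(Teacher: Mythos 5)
Your proposal follows essentially the same route as the paper's proof: testing the first equation with $pu^{p-1}$, absorbing the nonlocal logistic term through Lemma \ref{l3} against exactly half of the diffusive dissipation, splitting the chemotaxis couplings into a power of $u$ and a power of $|\Delta v|$ (resp.\ $|\Delta w|$), generating the damping term $+\kappa\int_\Omega u^p$ via Gagliardo--Nirenberg and the mass bound, and closing with the variation-of-constants formula matched to the exponentially weighted maximal Sobolev regularity of Lemma \ref{l2}, with $l+k<1+\frac{2}{n}$ ensuring the final Young step is subcritical. The only point you leave implicit is the case $M_0\geq 1$, where $1-M(t)\leq 0$ makes the source term nonpositive so that Lemma \ref{l3} is not needed at all; the paper records this case separately at the end.
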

\begin{proof}
	From Lemma \ref{l0}, $M_0\leq M(t)\leq 1$ for $M_0<1$. Multiply with $p\upo$, $p>1$, in the first equation of \eqref{2} and integrate over $\Omega$, we get
	\begin{align*}
		p\ints\bu_t\upo=&d_1 p\ints\upo\Delta\bu-p\chi\ints\upo\nabla\cdot\left(\buk\gv\right)+p\xi\ints\upo\nabla\cdot\left(\buk\gw\right)\\&
		+\mu p\ints\upo\bum\left(1-\ints\bu(x,t)\dx\right).
	\end{align*}
	The integration by parts, gives us that
	\begin{align}
		\dt\ints\up\leq&-d_1p(p-1)\ints\upt|\gu|^2+\chi p(p-1)\ints\bu^{p+k-2}\gu\cdot\gv\nonumber\\
		&-\xi p(p-1)\ints\bu^{p+k-2}\gu\cdot\gw+\mu p\ints\upm-\mu p M_0\ints\upm.\label{l4.1.1}
	\end{align}
	Again use of integration by parts to the second and third term in R.H.S and using Lemma \ref{l3}, leads to 
	\begin{align}
		\dt\ints\up\leq&\frac{-4 d_1 (p-1)}{p}\ints\mgufpt^2-\chi p\fpk\ints\upk\lv+\xi p\fpk\ints\upk\lw\nonumber\\
		&+\frac{2d_1 (p-1)}{p}\ints\mgufpt^2+C_0\nonumber\\
		\leq&\frac{-2 d_1 (p-1)}{p}\ints\mgufpt^2+I_1+I_2+C_0.\label{l4.2}
	\end{align}
	Let $\delta=\frac{np+2}{(p+k-1)n}>1,$ now applying the Young's inequality with $\epsilon>0$, we get
	\begin{align}
		I_1=\left|-\chi p\fpk\ints\upk\lv\right|\leq& \epsilon \ints \bu^{(p+k-1)\delta}+C_1\ints\mlv^\frac{\delta}{\delta-1}\nonumber\\
		\leq& \epsilon \ints \bu^\fnpt+C_1\ints\mlv^\fnpk,\label{l4.3}
	\end{align}
	where $C_1>0$. 
	Similarly for $I_2$
	\begin{align}
		I_2=\xi p\fpk\ints\upk\lw\leq& \epsilon \ints \bu^{(p+k-1)\delta}+C_2\ints\mlw^\frac{\delta}{\delta-1}\nonumber\\
		\leq& \epsilon \ints \bu^\fnpt+C_2\ints\mlw^\fnpk,\label{l4.4}
	\end{align}
	where $C_2>0$. 
	Substituting. \eqref{l4.3} and \eqref{l4.4} in \eqref{l4.2}, we obtain
	\begin{align}
		\dt\ints\up+\frac{d_1 (p-1)}{p}\ints\mgufpt^2\leq&\frac{-d_1 (p-1)}{p}\ints\mgufpt^2+2\epsilon \ints \bu^\fnpt\nonumber\\
		&+C_1\ints\mlv^\fnpk+C_2\ints\mlw^\fnpk+C_0.\label{l4.6}
	\end{align}
	Using the Gagliardo-Nirenberg inequity and the Young's inequality, we achieve
	\begin{align*}
		\ints\up&=\left\|\bu^{\frac{p}{2}}\right\|^2_{\lts}\leq C_3\left(\left\|\nabla\bu^{\frac{p}{2}}\right\|^{2b}_{\lts}\,\,\left\|\bu^{\frac{p}{2}}\right\|^{2(1-b)}_{\ltps}+\left\|\bu^{\frac{p}{2}}\right\|^2_{\ltps}\right)\nonumber\\
		&\leq \epsilon\left(\left\|\nabla\bu^{\frac{p}{2}}\right\|^{2b}_{\lts}\right)^\frac{1}{b}+C_3\,C(\epsilon)\left(\left\|\bu^{\frac{p}{2}}\right\|^{2(1-b)}_{\ltps}\right)^\frac{1}{1-b}+C_3\left\|\bu^{\frac{p}{2}}\right\|^2_{\ltps}\nonumber\\
		&\leq \epsilon\left\|\nabla\bu^{\frac{p}{2}}\right\|^{2}_{\lts}+C_{4} \left\|\bu^{\frac{p}{2}}\right\|^2_{\ltps}\nonumber\\
		&\leq \epsilon\left\|\nabla\bu^{\frac{p}{2}}\right\|^{2}_{\lts}+C_{4} \nru^p_{\los}\nonumber\\
		&\leq \frac{2-(k-1)n}{np+2}\frac{d_1(p-1)}{p}\left\|\nabla\bu^{\frac{p}{2}}\right\|^{2}_{\lts}+C_{5}\nonumber\\
		&\leq \frac{2-(k-1)n}{np+2}\frac{d_1(p-1)}{p}\ints\left|\nabla\bu^{\frac{p}{2}}\right|^{2}+C_{5},
	\end{align*}
	where $b=\frac{\frac{p}{2}-\frac{1}{2}}{\frac{p}{2}+\frac{1}{n}-\frac{1}{2}}\in(0,1)$ and $C_5>0$. Now, we can estimate the second term in \eqref{l4.6} using the above inequality as follows
	\begin{align}
		\fnpk\ints\up&\leq\frac{d_1(p-1)}{p}\ints\left|\nabla\bu^{\frac{p}{2}}\right|^{2}+C_{6},\label{l4.9}
	\end{align}
	where $C_6>0$. Again, using the Gagliardo-Nirenberg inequity, to get
	\begin{align*}
		\ints\bu^\fnpt=&\left\|\bu^{\frac{p}{2}}\right\|^{\frac{2(np+2)}{np}}_{\frac{2(np+2)}{np}}\leq C_7\left(\left\|\nabla\bu^{\frac{p}{2}}\right\|^{\frac{np}{np+2}}_{\lts}\,\,\left\|\bu^{\frac{p}{2}}\right\|^{\frac{2}{np+2}}_{\ltps}+\left\|\bu^{\frac{p}{2}}\right\|_{\ltps}\right)^{\frac{2(np+2)}{np}}\nonumber\\
		\leq& C_7\left\|\nabla\bu^{\frac{p}{2}}\right\|^{2}_{\lts}\,\,\left\|\bu^{\frac{p}{2}}\right\|^{\frac{4}{np}}_{\ltps}+C_7\left\|\bu^{\frac{p}{2}}\right\|^{\frac{2(np+2)}{np}}_{\ltps}\nonumber\\
		\leq& C_7\left\|\nabla\bu^{\frac{p}{2}}\right\|^{2}_{\lts}\:\nru^{\frac{2}{n}}_{\los}+C_7\nru^{\frac{np+2}{n}}_{\los}\nonumber\\
		\leq& C_8\left\|\nabla\bu^{\frac{p}{2}}\right\|^{2}_{\lts}+C_9\nonumber\\
		\leq& C_8\ints\left|\nabla\bu^{\frac{p}{2}}\right|^{2}+C_9,\nonumber
	\end{align*}
where $C_8, C_9>0$	Now, we can estimate the third term in \eqref{l4.6} using the above inequality as follows
	\begin{align}
		\frac{-d_1(p-1)}{p}\ints\left|\nabla\bu^{\frac{p}{2}}\right|^{2}\leq \frac{-d_1(p-1)}{pC_8}\ints\bu^\fnpt+C_{10},\label{l4.7}
	\end{align}
where $C_{10}>0$. Substituting \eqref{l4.9} and \eqref{l4.7} in \eqref{l4.6}, we arrive at
		\begin{align}
			\dt\ints\up+\fnpk\ints\up\leq&\frac{-d_1(p-1)}{pC_8}\ints\bu^\fnpt+2\epsilon \ints \bu^\fnpt\nonumber\\
			&+C_1\ints\mlv^\fnpk+C_2\ints\mlw^\fnpk+C_{11},\label{l4.12}
		\end{align}
	where $C_{11}>0$. Applying the variation-of-constants formula to \eqref{l4.12}, we obtain the following estimate with $\epsilon=\frac{d_1(p-1)}{4pC_8}$
		\begin{align}
			\ints\up\leq&\frac{-d_1(p-1)}{2pC_8}\intts\enpkts\ints\bu^\fnpt+C_1\intts\enpkts\ints\mlv^\fnpk\nonumber\\
			&+C_2\intts\enpkts\ints\mlw^\fnpk+C_{12},\label{l4.13}
		\end{align}
		where $C_{12}>0$. 
		From  Lemma \ref{l2}, there
		exists $C_p > 0$ such that
		\begin{align}
			C_1\intts\ints\enpkts\mlv^\fnpk\leq& C_{13}\intts\ints\enpkts|\bu|^{\frac{(np+2)l}{2-(k-1)n}}\nonumber\\
			&+C_{14}\Big\|\bv(\cdot, s_0)\Big\|^{\fnpk}_{{ \mathcal{W}}^{2,\fnpk}}.\label{l4.14}
		\end{align}
		By the similar manner, we have
		\begin{align}
			C_2\intts\ints\enpkts\mlw^\fnpk\leq& C_{15}\intts\ints\enpkts|\bu|^{\frac{(np+2)l}{2-(k-1)n}}\nonumber\\
			&+C_{16}\Big\|\bw(\cdot, s_0)\Big\|^{\fnpk}_{{ \mathcal{W}}^{2,\fnpk}}.\label{l4.15}
		\end{align}
		Substituting \eqref{l4.14} and \eqref{l4.15} in \eqref{l4.13}, we obtain that
		\begin{align}
			\ints\up\leq&\frac{-d_1(p-1)}{2pC_8}\intts\ints\enpkts\bu^\fnpt\nonumber\\
			&+C_{17}\intts\ints\enpkts\bu^{\frac{(np+2)l}{2-(k-1)n}}+C_{18},\label{l4.16}
		\end{align}
		where  $C_{18}>0$. 
		Using the Young's inequality with $l+k<1+\frac{2}{n}$, we have
		\begin{align}
			C_{17}\ints\bu^{\frac{(np+2)l}{2-(k-1)n}}\leq \frac{d_1(p-1)}{2pC_8} \ints \bu^\fnpt+C_{19},\label{l4.17}
		\end{align}
		where  $C_{19}>0$. 
		Substituting \eqref{l4.17} in \eqref{l4.16}, finally, we arrive at
		\begin{align*}
			\ints\up\leq C_{20},\qquad\qquad\forall t\in(s_0, \tmax).
		\end{align*}
		where the constant $C_{20}>0$.\\
		Now, for $M_0\geq 1$, from Lemma \ref{l0} we have $1\leq M(t)\leq M_0$. Then \eqref{l4.1.1} can be written as
		\begin{align*}
			\dt\ints\up\leq&-d_1p(p-1)\ints\upt|\gu|^2+\chi p(p-1)\ints\bu^{p+k-2}\gu\cdot\gv\\
			&-\xi p(p-1)\ints\bu^{p+k-2}\gu\cdot\gw.
		\end{align*}
		Applying the same procedure, one can obtain the boundedness. This completes the proof.
	\end{proof}

{\bf Proof of Theorem \ref{t1}.} We proceed by contradiction. Assume that $\tmax < \infty$. By Lemma \ref{l5}, the bounds remain valid for all $t \in (0,\tmax)$. However, the blow-up criterion \eqref{l1.1} contradicts this result. Hence, the maximal existence time cannot be finite i.e, $\tmax=\infty$. Consequently, we obtain the uniform estimates
	\begin{align*}
	\nut_{\lis}+\nvt_{\wsq}+\nwt_{\wsq}\leq C,
\end{align*}
for all $t > 0$ and the constant $C$ is positive. This completes the proof.

\section{Conclusion}
\quad This work analyzes the existence of classical solution to the attraction repulsion chemotaxis system with a nonlocal source and sublinear production. For $n\geq 2$, it is proved that the system \eqref{2} has a bounded classical solution when the parameters $k, l$ and $m$ satisfy some suitable conditions.



\begin{thebibliography}{100}
	\bibitem{nbellomo}N. Bellomo, A. Bellouquid, Y. Tao, M. Winkler, Toward a mathematical theory of	Keller–Segel models of pattern formation in biological tissues, Math. Models Methods Appl. Sci., 25 (2015), 1663–1763
	
	\bibitem{sbian2017} S. Bian, Global solutions to a nonlocal Fisher-KPP type problem, Acta Appl. Math., 147 (2017), 187-195.
	
	\bibitem{sbian}S. Bian, L. Chen, E. A. Latos, Global existence and asymptotic behavior of solutions to a 	nonlocal Fisher-KPP type problem, Nonlinear Anal., 149 (2017) 165-176.
	
	\bibitem{bian2018} S. Bian, L. Chen, E.A. Latos, Nonlocal nonlinear reaction preventing blow-up in supercritical case of chemotaxis system, Nonlinear Anal., 176 (2018), 178-191.
	
	\bibitem{xcao} X. Cao, Boundedness in a quasilinear parabolic-parabolic Keller-Segel system with logistic source, J. Math. Anal. Appl., 412 (2014),  181-188.
		
	\bibitem{chiyo}Y. Chiyo, M. Mizukami, T. Yokota, Global existence and boundedness in a fully parabolic attraction-repulsion chemotaxis system with signal-dependent sensitivities without logistic source, J. Math. Anal. Appl., 489 (2020), 124153.
	
	\bibitem{chiyo2024} Y. Chiyo, F.G. D\"uzg\"un, S. Frassu, G. Viglialoro, Boundedness through nonlocal dampening effects in a fully parabolic chemotaxis model with sub and superquadratic growth, Appl. Math. Optim., 89 (2024), 9.
		
	\bibitem{columbu20242} A. Columbu, S. Frassu, G. Viglialoro, Refined criteria toward boundedness in an attraction–repulsion chemotaxis system with nonlinear productions, Appl. Anal., 103 (2023), 415–431.
	
	\bibitem{columbu2024} A. Columbu, R.D. Fuentes, S. Frassu, Uniform-in-time boundedness in a class of local and nonlocal nonlinear attraction–repulsion chemotaxis models with logistics, Nonlinear Anal. RWA., 79 (2024), 104135.
	
	\bibitem{wdu2024} W. Du, A further study on an attraction-repulsion chemotaxis system with logistic source, AIMS Math., 9 (2024), 16924–16930.
	
		\bibitem{evans} L. C. Evans, \emph{Partial Differential Equations,} American Mathematical Society, (1998), ISBN 9780821848593.
	
	\bibitem{frassu} S. Frassu, G. Viglialoro, Boundedness for a fully parabolic Keller--Segel model with sublinear segregation and superlinear aggregation, Acta Appl. Math., 171 (2021), 19.
	
	\bibitem{funtes2025} R.D. Fuentes, S. Frassu, G. Viglialoro, Dissipation Through Combinations of Nonlocal and Gradient Nonlinearities in Chemotaxis Models, Acta Appl. Math., 195 (2025), 10.
	
	\bibitem{qguo} Q. Guo, Z. Jiang, S. Zheng, Critical mass for an attraction-repulsion chemotaxis system, Appl. Anal., 97 (2018), 2349-2354.
	
	\bibitem{hetian}X. He, M. Tian, S. Zheng, Large time behavior of solutions to a quasilinear attraction-repulsion chemotaxis system with logistic source, Nonlinear Anal. RWA., 54 (2020), 103095.
	
	\bibitem{hieber} M. Hieber, J. Pruss, Heat kernels and maximal $\lps$-$\lqs$ estimate for parabolic evolution equations, Commun. Partial Differ. Equ., 22 (1997), 1647-1669.
	
	\bibitem{lhong} L. Hong, M. Tian, S Zheng, An attraction-repulsion chemotaxis system with nonlinear productions, J. Math. Anal. Appl., 484 (2020), 123703.
	
	\bibitem{horstmann}D. Horstmann, From 1970 until present: the Keller–Segel model in chemotaxis and its consequences I, Jahresberichte DMV., 105 (2003), 103–165.
	
	\bibitem{horst} D. Horstmann, M. Winkler, Boundedness vs. blow-up in a chemotaxis system, J. Differ. Equ., 215 (2005), 52-107.
	
	\bibitem{rhu2024} R. Hu, P. Zheng, Global Stability in a Two-species Attraction–Repulsion System with Competitive and Nonlocal Kinetics, J. Dynam. Differ. Equ., 36 (2024), 2555–2592.
	
	\bibitem{hyjin}H.Y. Jin, Boundedness of the attraction-repulsion Keller-Segel system, J. Math. Anal. Appl., 422 (2015), 1463--1478.
	
	\bibitem{jinliu} H.Y. Jin, Z. Liu, Large time behavior of the full attraction--repulsion Keller--Segel system in the whole space, Appl. Math. Lett., 47 (2015), 13-20.
	
	\bibitem{jinwang} H.Y. Jin, Z.A. Wang, Asymptotic dynamics of the one-dimensional attraction--repulsion Keller-Segel model, J. Math. Methods Appl. Sci., 38 (2015), 444-457.
	
	\bibitem{hyjinwang} H.Y. Jin, Z.A. Wang, Global stabilization of the full attraction-repulsion Keller--Segel system, Discrete Continuous Dyn. Syst. Ser. S., 40 (2020),  3509-3527.
	
	\bibitem{lan} J. Lankeit, M. Winkler, Facing low regularity in chemotaxis systems, Jber. DMV., 122 (2020), 35-64.
	
	\bibitem{limu} D. Li, C. Mu, K. Lin, L. Wang, Large time behavior of solution to an attraction-repulsion chemotaxis system with logistic source in three dimensions, J. Math. Anal. Appl., 448 (2017), 914-936.
	
	\bibitem{liwang} Y. Li, W. Wang, Boundedness in a four-dimensional attraction-repulsion chemotaxis system with logistic source, Math. Methods Appl. Sci., 41 (2018), 4936-4942.
	
	\bibitem{lixiang} X. Li, Z. Xiang, On an attraction-repulsion chemotaxis system with a logistic source, IMA J. Appl. Math., 81 (2016), 165-198.
	
	\bibitem{linmu} K. Lin, C. Mu, Global existence and convergence to steady states for an attraction-repulsion chemotaxis system, Nonlinear Anal. RWA., 21 (2016), 630-642.
	
	\bibitem{klin} K. Lin, C. Mu, Y. Gao, Boundedness and blow up in the higher-dimensional attraction-repulsion chemotaxis system with nonlinear diffusion, J. Differ. Equ., 261 (2016), 4524-4572.
	
	\bibitem{liutao} D. Liu, Y. Tao, Global boundedness in a fully parabolic attraction-repulsion chemotaxis model, J. Math. Methods Appl. Sci., 38 (2015),  2537-2546.
	
	\bibitem{liuwang}J. Liu, Z.A. Wang, Classical solutions and steady states of an attraction-repulsion chemotaxis model in one dimension, J. Biol. Dyn., 6 (2012), 31-41.
	
	\bibitem{mliu} M. Liu, Y. Li, Finite-time blowup in attraction-repulsion systems with nonlinear signal production, Nonlinear Anal. RWA., 61 (2021),  103305.
	
	\bibitem{mluca} M. Luca, Chemotactic signaling, microglia, and Alzheimer's disease senile plaques: Is there a connection?, Bull. Math. Biol., 65 (2003),  693-730.
	
	\bibitem{mnegreanu2021} M. Negreanu, J.I. Tello, A.M. Vargas, On a fully parabolic chemotaxis system with nonlocal growth term, Nonlinear Anal., 213 (2021), 112518.
	
	\bibitem{gren2022}G. Ren, Global boundedness and asymptotic behavior in an attraction–repulsion chemotaxis system with nonlocal terms, Z. Angew. Math. Phys., 73 (2022), 200.
	
	\bibitem{renliu} G. Ren, B. Liu, Global boundedness and asymptotic behavior in a quasilinear attraction-repulsion chemotaxis model with nonlinear signal production and logistic-type source, Math. Models Methods Appl. Sci., 30 (2020),  2619-2689.
	
	\bibitem{gren} G. Ren, B. Liu, Large time behavior of solutions to a quasilinear attraction-repulsion chemotaxis model with nonlinear secretion, J. Math. Phys., 62 (2021), 091510.
	
	\bibitem{shiliu} S. Shi, Z. Liu, H.Y. Jin, Boundedness and large time behavior of an attraction-repulsion chemotaxis model with logistic source, Kinetic \& Related Models, 10 (2017), 855-878.
	
	\bibitem{ywang2015}Y. Wang, A quasilinear attraction-repulsion chemotaxis system of parabolic-elliptic type with logistic source, J. Math. Anal. Appl., 441 (2016), 259-292.
	
	\bibitem{ywang2016} Y. Wang, Global existence and boundedness in a quasilinear attraction-repulsion chemotaxis system of parabolic-elliptic type, Boundary Value Problems, 9 (2016).
	
	\bibitem{wangxiang} Y. Wang, Z. Xiang, Boundedness in a quasilinear 2D parabolic-parabolic attraction-repulsion chemotaxis system, Discrete Continuous Dyn. Syst. Ser. B., 21 (2016), 1953-1973.
	
	\bibitem{winkler} M. Winkler, Aggregation vs. global diffusive behavior in the higher-dimensional Keller-Segel model, J. Differ. Equ., 248 (2010), 2889-2905.
	
	\bibitem{swu} S. Wu, J. Shi, B. Wu, Global existence of solutions to an attraction-repulsion chemotaxis model with growth, Commun. Pure Appl. Anal., 16 (2017), 1037-1058.
	
	\bibitem{xuzheng} P. Xu, S. Zheng, Global boundedness in an attraction-repulsion chemotaxis system with logistic source, Appl. Math. Lett., 83 (2018), 1-6.
	
	\bibitem{lyan} L. Yan, Z. Yang, Global Existence and Blow-up of Classical Solution for an Attraction-repulsion Chemotaxis System with Logistic Source, J. adv. math. comput. sci., 30 (2019), 1-16.
	
	\bibitem{hyi2025} H. Yi, S. Qiu, G. Xu, Boundedness and asymptotic behavior in the higher dimensional fully parabolic attraction-repulsion chemotaxis system with nonlinear diffusion, Journal of Mathematical Analysis and Applications, J. Math. Anal. Appl., 541 (2025), 128709. 
	
	\bibitem{yzeng} Y. Zeng, Existence of global bounded classical solution to a quasilinear attraction-repulsion chemotaxis system with logistic source, Nonlinear Anal., 161 (2017), 182-197.
	
	\bibitem{qzhang} Q. Zhang, Y. Li, An attraction-repulsion chemotaxis system with logistic source, Z. Angew. Math. Mech., 96 (2016), 570-584.
	
	\bibitem{jzhao} J. Zhao, C. Mu, D. Zhou, K. Lin, A parabolic-elliptic-elliptic attraction-repulsion chemotaxis system with logistic source, J. Math. Anal. Appl., 455 (2017), 650-679.
	
	\bibitem{zhengmu} P. Zheng, C. Mu, X. Hu, Boundedness in the higher dimensional attraction-repulsion chemotaxis-growth system, Comput. Math. Appl., 72 (2016),  2194-2202.
	
	\bibitem{xzhou} X. Zhou, Z. Li, J. Zhao, Asymptotic behavior in an attraction-repulsion chemotaxis system with nonlinear productions, J. Math. Anal. Appl., 507 (2022), 125763.
\end{thebibliography}
\end{document}